\newcommand{\newsection}[1]{\setcounter{equation}{0} \section{#1}}
\newcommand{\bea}{\begin{eqnarray}}
	\newcommand{\eea}{\end{eqnarray}}
\newcommand{\clb}{\mathcal{B}}
\newcommand{\cle}{\mathcal{E}}
\newcommand{\clh}{\mathcal{H}}
\newcommand{\clm}{\mathcal{M}}
\newcommand{\cln}{\mathcal{N}}
\newcommand{\clr}{\mathcal{R}}
\newcommand{\clu}{\mathcal{U}}
\newcommand{\clw}{\mathcal{W}}
\def\textmatrix#1&#2\\#3&#4\\{\bigl({#1 \atop #3}\ {#2 \atop #4}\bigr)}
\def\dispmatrix#1&#2\\#3&#4\\{\left({#1 \atop #3}\ {#2 \atop #4}\right)}
\newcommand{\be}{\begin{equation}}
	\newcommand{\ee}{\end{equation}}
\newcommand{\ben}{\begin{eqnarray*}}
	\newcommand{\een}{\end{eqnarray*}}
\newcommand{\NI}{\noindent}
\newcommand{\bi}{\begin{itemize}}
	\newcommand{\ei}{\end{itemize}}
\newcommand{\diag}{\mbox{diag}}
\theoremstyle{definition}
\theoremstyle{plain}
\newtheorem{thm}{Theorem}[section]
\newtheorem{lem}[thm]{Lemma}
\theoremstyle{definition}
\newtheorem{defn}[thm]{Definition}
\newtheorem{rem}[thm]{Remark}
\newtheorem{ex}[thm]{Example}
\numberwithin{equation}{section}
\let\phi=\varphi
\begin{document}

\title[Wold-type decomposition for $\clu_n$-twisted contractions]
{Wold-type decomposition for $\clu_n$-twisted contractions}	

\author[Majee]{Satyabrata Majee,}
\address{Indian Institute of Technology Roorkee, Department of Mathematics,
	Roorkee-247 667, Uttarakhand, India}
\email{smajee@ma.iitr.ac.in}

\author[Maji]{Amit Maji$^\dag$}
\address{Indian Institute of Technology Roorkee, Department of Mathematics,
	Roorkee-247 667, Uttarakhand,  India}
\email{amit.maji@ma.iitr.ac.in, amit.iitm07@gmail.com ($^\dag$Corresponding author)}

\subjclass[2010]{47A45, 47A20, 47A05, 47A13}

%\today

\keywords{Isometry; contraction; $\clu_n$-twisted contractions;
	completely non-unitary; shift}

\begin{abstract}
Let $n>1$, and $\{U_{ij}\}$ for $1 \leq i < j \leq n$ be $\binom{n}{2}$ 
commuting unitaries on a Hilbert space $\clh$ such that $U_{ji}:=U^*_{ij}$.
An $n$-tuple of contractions  $(T_1, \dots, T_n)$ on $\clh$ is called  
$\clu_n$-twisted contraction with respect to a twist $\{U_{ij}\}_{i<j}$ 
if $T_1, \dots, T_n$ satisfy 
\[
T_iT_j=U_{ij}T_jT_i; \hspace{0.5cm}
\hspace{1cm} T_i^*T_j= U^*_{ij}T_jT_i^* \hspace{0.5cm}
\mbox{and} \hspace{0.5cm} T_kU_{ij} =U_{ij}T_k
\]
for all $i,j,k=1, \dots, n$ and $i \neq j$. 
	
\NI
We obtain a recipe to calculate the orthogonal spaces of the Wold-type 
decomposition for $\clu_n$-twisted contractions on Hilbert spaces. As 
a by-product, a new proof as well as complete structure for $\clu_2$-twisted 
(or pair of doubly twisted) and $\clu_n$-twisted isometries have been 
established.
\end{abstract}
\maketitle

\newsection{Introduction }
A fundamental problem in the theory of operators, function theory 
and operator algebras is the classification problem for a tuple 
of commuting isometries on Hilbert spaces. The canonical decomposition for
a contraction plays a significant role in many areas of operator algebras 
and operator theory, namely, dilation theory, invariant subspace theory, 
operator interpolation problem, etc. It says that every contraction 
can be uniquely decomposed into the orthogonal sum of a unitary operator 
and a completely non-unitary operator. In particular, the canonical 
decomposition of an isometry coincides with the classical {\it{Wold 
decomposition or Wold-von Neumann decomposition}}. Indeed, the completely 
non-unitary part of an isometry becomes a unilateral shift (of any multiplicity). 
This decomposition was firstly studied by Wold \cite{WOLD-TIME SERIES} 
for stationary stochastic processes. It is expected that the multidimensional 
Wold-type decomposition will provide a large class of applications.

A natural issue is the extension of decomposition from a single 
contraction to a tuple of contractions. Using Suciu's \cite{SUCIU-SEMIGROUPS} 
decomposition of the semigroup of isometries, S\l oci\'{n}ski \cite{SLOCINSKI-WOLD} 
firstly obtained a Wold-type decomposition for pairs of doubly commuting isometries. 
It states that a pair of doubly commuting isometries have fourfold Wold
-type decomposition of the form unitary-unitary, unitary-shift, 
shift-unitary, and shift-shift. In 2004, Popovici \cite{POPOVICI-WOLD TYPE} 
achieved Wold-type decomposition for a pair $(V_1, V_2)$ 
of commuting isometries on a Hilbert space. More specifically, 
the pair $(V_1, V_2)$ can be uniquely decomposed into the orthogonal sum
of bi-unitary, a shift-unitary, a unitary-shift, and a weak bi-shift.
Later, Sarkar \cite{SARKAR-WOLD} generalized S\l oci\'{n}ski's result 
and also obtained an explicit description of closed subspaces 
in the orthogonal decomposition for the $n$-tuples of 
doubly commuting isometries. Recently Maji, Sarkar, and Sankar 
\cite{MSS-PAIRS} have studied various natural representations 
of a large class of pairs of commuting isometries on Hilbert spaces,
and B\^{i}nzar et al. \cite{BBLPS-Wold-Slocinski} studied 
Wold-S\l oci\'{n}ski decomposition for commuting isometric triples.
On the other hand, power partial isometry is a large class of operators 
with a well-defined completely non-unitary part. Halmos and Wallen 
\cite{HW-POWERS} studied decomposition for a power partial isometry. 
After that Catepill\'{a}n and Szyma\'{n}ski \cite{CS-model}
have generalized for a pair of doubly commuting power partial isometries.
For more results one can refer to \cite{BDF-MULTI-ISOMETRIES},
\cite{BDF-BI-ISOMETRIES}, \cite{BCL}, \cite{BKPS-COMMUTING}, \cite{BKPS-SHIFT TYPE}, 
\cite{BURDAK-DECOMPOSITION}, \cite{BKS-CANONICAL}, \cite{GG-Wold}, 
\cite{KO-Wold-type}, etc. S\l oci\'{n}ski \cite{SLOCINSKI-MODELS} 
(see also Burdak \cite{BURDAK-DECOMPOSITION}) studied decomposition for 
pairs of doubly commuting contractions and obtained the following result:
\begin{thm}
	Let $T=(T_1, T_2)$ be a pair of doubly commuting contractions on a 
	Hilbert space $\clh$. Then there exists a unique decomposition 
	\[
	\mathcal{H}=\mathcal{H}_{uu} \oplus \mathcal{H}_{u\neg u} 
	\oplus \mathcal{H}_{\neg u u} \oplus \mathcal{H}_{\neg u \neg u}
	\]
	where $\mathcal{H}_{ij}$ are joint $T$-reducing subspaces of $\clh$ 
	for all $i,j=u, \neg u$. Moreover, $T_1$ on $\clh_{ij}$ is unitary 
	if $i=u$ and completely non-unitary if $i=\neg u$ and $T_2$ on 
	$\clh_{ij}$ is unitary if $j=u$ and completely non-unitary if 
	$j=\neg u$. 
\end{thm}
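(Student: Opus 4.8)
The plan is to bootstrap from the one-variable theory, using double commutativity to keep every subspace jointly reducing. Recall the classical canonical decomposition of a single contraction $S$ on a Hilbert space $\clk$: the subspace
\[
\clk_u(S)\bydef\Big(\bigcap_{n\ge1}\ker\big(I-S^{*n}S^{n}\big)\Big)\cap\Big(\bigcap_{n\ge1}\ker\big(I-S^{n}S^{*n}\big)\Big)
\]
(equivalently $\{h\in\clk:\|S^{n}h\|=\|S^{*n}h\|=\|h\|\ \fal\, n\ge1\}$) is the largest $S$-reducing subspace on which $S$ restricts to a unitary, and $S$ is completely non-unitary (c.n.u.) on $\clk_{\neg u}(S)\bydef\clk\ominus\clk_u(S)$; this splitting is unique. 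The key lemma I would isolate is: \emph{if a bounded operator $R$ doubly commutes with $S$, i.e.\ $RS=SR$ and $RS^{*}=S^{*}R$, then $\clk_u(S)$ and $\clk_{\neg u}(S)$ are $R$-reducing.} This is immediate from the displayed formula: $R$ commutes with every $S^{n}$ and every $S^{*n}$, hence with each $S^{*n}S^{n}$ and $S^{n}S^{*n}$, so $R$ preserves each kernel appearing in the intersection and therefore $\clk_u(S)$ itself; applying the same to $R^{*}$ (which also doubly commutes with $S$) shows $\clk_u(S)$ reduces $R$, and then so does its complement.

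For the theorem, apply the canonical decomposition to $T_{1}$ to get $\clh=\clh_u(T_{1})\oplus\clh_{\neg u}(T_{1})$, both $T_{1}$-reducing. Since $T_{2}$ doubly commutes with $T_{1}$, the lemma (with $S=T_{1}$, $R=T_{2}$) makes both summands $T_{2}$-reducing, hence jointly $(T_{1},T_{2})$-reducing, with the two restricted pairs again doubly commuting contractions. Inside each summand apply the canonical decomposition to the restriction of $T_{2}$: this yields $\clh_u(T_{1})=\clh_{uu}\oplus\clh_{u\neg u}$ and $\clh_{\neg u}(T_{1})=\clh_{\neg u\,u}\oplus\clh_{\neg u\neg u}$, peeling off the $T_{2}$-unitary and $T_{2}$-c.n.u.\ parts. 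Applying the lemma once more, now with $S$ the restriction of $T_{2}$ and $R$ the (doubly commuting) restriction of $T_{1}$, each of the four pieces is also $T_{1}$-reducing. Hence all four are joint $T$-reducing subspaces, $T_{1}$ is unitary on $\clh_{uu}\oplus\clh_{u\neg u}$ and c.n.u.\ on $\clh_{\neg u\,u}\oplus\clh_{\neg u\neg u}$, while $T_{2}$ is unitary on $\clh_{uu}\oplus\clh_{\neg u\,u}$ and c.n.u.\ on $\clh_{u\neg u}\oplus\clh_{\neg u\neg u}$, exactly as claimed.

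For uniqueness, note first that for any $T_{1}$-reducing subspace $\clm$ the canonical decomposition of $T_{1}|_{\clm}$ is $\clm=(\clm\cap\clh_u(T_{1}))\oplus(\clm\cap\clh_{\neg u}(T_{1}))$, as one reads off the displayed formula. Given any decomposition $\clh=\clk_{uu}\oplus\clk_{u\neg u}\oplus\clk_{\neg u\,u}\oplus\clk_{\neg u\neg u}$ with the stated properties, apply this to each $\clk_{ij}$: if $i=u$ then $T_{1}|_{\clk_{ij}}$ is unitary, so $\clk_{ij}\cap\clh_{\neg u}(T_{1})=\{0\}$ and $\clk_{ij}\subseteq\clh_u(T_{1})$; if $i=\neg u$ then $\clk_{ij}\subseteq\clh_{\neg u}(T_{1})$. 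Therefore $\clk_{uu}\oplus\clk_{u\neg u}\subseteq\clh_u(T_{1})$ and $\clk_{\neg u\,u}\oplus\clk_{\neg u\neg u}\subseteq\clh_{\neg u}(T_{1})$; since these sums are complementary in $\clh$, both inclusions are equalities. Running the same argument for $T_{2}$ inside $\clh_u(T_{1})$ identifies $\{\clk_{uu},\clk_{u\neg u}\}$ with the unique canonical decomposition of $T_{2}|_{\clh_u(T_{1})}$, giving $\clk_{uu}=\clh_{uu}$ and $\clk_{u\neg u}=\clh_{u\neg u}$, and likewise $\clk_{\neg u\,u}=\clh_{\neg u\,u}$, $\clk_{\neg u\neg u}=\clh_{\neg u\neg u}$. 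I expect the only real content to be the key lemma---specifically the point that genuine double commutativity, not mere commutativity, is what forces $T_{2}$ and $T_{2}^{*}$ to respect the canonical subspaces of $T_{1}$ (for a merely commuting pair this fails, which is precisely why Popovici's decomposition of commuting isometric pairs has four more delicate summands). Everything else is the standard one-variable theory plus routine bookkeeping about orthogonal complements of reducing subspaces.
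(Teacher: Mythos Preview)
Your proposal is correct and follows essentially the same route the paper takes (in its proof of the more general Theorem~\ref{Wold-type-pair-contractions}, of which the stated theorem is the special case $U=I$): apply the single-operator canonical decomposition to $T_1$, use double commutativity to show the resulting pieces reduce $T_2$ (your key lemma is the doubly-commuting case of the paper's Lemma~\ref{Reduce-subspace-prop1}, proved via the explicit kernel description in Theorem~\ref{Wold-type-contraction}), and then iterate with $T_2$ on each piece. Your uniqueness argument is actually more detailed than what the paper writes --- it simply appeals to the uniqueness of the one-variable canonical decomposition without spelling out the two-step reduction you give.
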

\NI 
However, the complete description of the above orthogonal 
decomposition spaces is not explicit. Burdak \cite{BURDAK-DECOMPOSITION} 
also developed a characterization for pairs of commuting (not necessarily 
doubly commuting) contractions and obtained decomposition results in the case 
of commuting pairs of power partial isometries. 
Recently, Jeu and Pinto \cite{JP-NON COMMUTING} studied a simultaneous 
Wold decomposition for an $n$-tuple $(n > 1)$ of doubly non-commuting 
isometries which has been classified up to unitary equivalence by using 
this decomposition. In 2022, Rakshit, Sarkar, and Suryawanshi 
\cite{RSS-TWISTED ISOMETRIES} showed that each $\clu_n$-twisted 
isometry agrees a von Neumann-Wold type decomposition and then 
described concrete analytic models of $\clu_n$-twisted isometries.
{\it It is now a natural query whether the above results can be extended 
to a large class of operators, namely, a class of twisted contractions
on Hilbert spaces.}

Motivated by the definition of $\clu_n$-twisted isometries in 
\cite{RSS-TWISTED ISOMETRIES}, we introduce the notion of 
$\clu_n$-twisted contractions (see definition in Section 3).
In this paper, we attempt to find a recipe for calculating 
the orthogonal spaces as well as to extend the results for 
pair of doubly twisted contractions (or $\clu_2$-twisted 
contractions) to multi-variable case. Our approach is based 
on the canonical decomposition for a single contraction 
and the geometry of Hilbert spaces.

The paper is organized as follows. In section 2, 
we discuss some basic definitions and the canonical 
decomposition for a single contraction. Section 3
is devoted to the decomposition for a pair of doubly 
twisted contractions. In section 4, we obtain a complete
description for $\clu_n$-twisted contractions and 
in particular for $\clu_n$-twisted isometries.

%%%%%%%%%%%%%%%%%%%%%%%%%%%%%%%%%%%%%%%%%%%%%%%%%%%%%%%%%%%%%%%%%%%%
%%%%%%%%%%%%%%%%%%%%%%%%%%%%%%%%%%%%%%%%%%%%%%%%%%%%%%%%%%%%%%%%%%%%
%%%%%%%%%%%%%%%%%%%%%%%%%%%%%%%%%%%%%%%%%%%%%%%%%%%%%%%%%%%%%%%%%%%%
\newsection {Preparatory Results}
In what follows $\mathbb{Z}_{+}$ denotes the set of non-negative 
integers, $\clh$ stands for a complex Hilbert space, $I$ denotes 
the identity operator on $\clh$ and $\mathcal{B}(\clh)$ 
as the algebra of all bounded linear operators on $\clh$. For a 
closed subspace $\mathcal{M}$ of $\mathcal{H}$, $P_{\clm}$ denotes 
the orthogonal projection of $\clh$ onto $\clm$. A closed subspace 
$\mathcal{M}$ of $\mathcal{H}$ is invariant under $T \in \clb(\clh)$
if $T(\mathcal{M}) \subseteq \mathcal{M}$; and subspace $\mathcal{M}$ 
reduces $T$ if $T(\mathcal{M}) \subseteq \mathcal{M}$ and $ T(\mathcal{M}
^{\perp}) \subseteq \mathcal{M}^{\perp}$. A contraction $T$ on $\clh$ 
(that is, $\|Th \| \leq \|h \|$ for all $h \in \clh$) is said to be a 
pure contraction if $T^{*m} \rightarrow 0$ as $m \rightarrow \infty$
in the strong operator topology. A contraction $T$ on $\mathcal{H}$ 
is called completely non-unitary (c.n.u. for short) if there does 
not exist any nonzero $T$-reducing subspace $\mathcal{L}$ of 
$\mathcal{H}$ such that $T|_{\mathcal{L}}$ is unitary (see \cite{NF-BOOK}). 
We denote $\cln(T)$ and $\clr(T)$ as the kernel and range of $T$, 
respectively. We frequently use the identity $\cln(T)
={\clr(T^*)}^{\perp}$ for any $T \in \clb(\clh)$. Also $\bigvee \clm $ 
stands for the closed linear span of a subset $\clm$  of $\clh$.
An operator $T$ on $\mathcal{H}$ is called a partial isometry if 
$|| Th||= ||h||$ for all $h \in \cln(T)^\perp.$ We say that $T$ is 
a power partial isometry if $T^n$ is a partial isometry for all 
$n \geq 1$.

We now recall \textit{canonical decomposition theorem} for a 
contraction (\cite{NF-BOOK}). In case of an isometry, the canonical
decomposition theorem coincides with the classical Wold-von Neumann
decomposition.

\begin{thm}
A contraction $T$ on a Hilbert space $\mathcal{H}$ corresponds a unique
decomposition of $\mathcal{H}$ into an orthogonal sum of two $T$-reducing 
subspaces $\mathcal{H}={\mathcal{H}}_u \oplus {\mathcal{H}}_{\neg u}$ 
such that $T|_{{\mathcal{H}}_u}$ is unitary and $T|_{{\mathcal{H}}_{\neg u}}$ 
is c.n.u. ( ${\mathcal{H}}_u$ or ${\mathcal{H}}_{\neg u}$ may equal to $\{0\}$).
Moreover,
\[
{\mathcal{H}}_u =	\{ h \in \clh : \|T^n h\|= \|h\|= \|{T^*}^n h\| 
~~\mbox{for}~~ n=1,2, \ldots \}.
\]
Here $T_u=T|_{{\mathcal{H}}_u}$ and $T_{\neg u}=T|_{{\mathcal{H}}_{\neg u}}$ 
are called unitary part and c.n.u. part of $T$, respectively and
$T=T_u \oplus T_{\neg u}$ is called the canonical decomposition of $T$.
\end{thm}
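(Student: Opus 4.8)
The plan is to prove this canonical decomposition theorem (Theorem 2.1 in the excerpt) by first constructing the candidate subspace $\clh_u$ via the explicit formula given in the statement, then verifying it is $T$-reducing, that $T$ restricts to a unitary there, that $T$ restricts to a c.n.u.\ operator on the orthogonal complement, and finally that this decomposition is unique. First I would \emph{define}
\[
\clh_u \bydef \{ h \in \clh : \|T^n h\| = \|h\| = \|{T^*}^n h\| \mbox{ for all } n \geq 1 \},
\]
and observe that by a standard argument (using that for a contraction $\|Th\| = \|h\|$ together with $\|T^2 h\| = \|h\|$ forces $\|T(Th)\| = \|Th\|$, etc.) $\clh_u$ is equal to the set of $h$ with $\|T^n h\| = \|h\|$ for all $n$ \emph{and} $\|{T^*}^n h\| = \|h\|$ for all $n$; the point is that $\clh_u$ is then a closed linear subspace, since each condition $\|T^n h\| = \|h\|$ defines the subspace $\cln(I - T^{*n}T^n)$ (using $I - T^{*n}T^n \geq 0$) and $\clh_u$ is an intersection of such.

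Next I would show $\clh_u$ is $T$-reducing. If $h \in \clh_u$ then clearly $\|T^n(Th)\| = \|T^{n+1} h\| = \|h\| \geq \|Th\| \geq \|T^{n+1}h\|$ so all these are equal and in particular $\|Th\| = \|h\|$, giving $\|T^n(Th)\| = \|Th\|$; similarly one checks $\|{T^*}^n(Th)\| = \|Th\|$ using the isometry property of $T$ on $\clh_u$ and the relation $T^*Th = h$ for $h \in \clh_u$ (which follows from $\|Th\| = \|h\|$ and $T^*T \leq I$). Hence $T\clh_u \subseteq \clh_u$. For $T^* \clh_u \subseteq \clh_u$ one argues symmetrically, exchanging the roles of $T$ and $T^*$; the key identity is again that $h \in \clh_u$ implies $TT^* h = h$ as well as $T^*T h = h$. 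Consequently $T|_{\clh_u}$ is an isometry whose adjoint is also an isometry, i.e.\ a unitary. Conversely, if $\cll$ is any $T$-reducing subspace on which $T$ is unitary, then for $h \in \cll$ we have $\|T^n h\| = \|h\| = \|{T^*}^n h\|$, so $\cll \subseteq \clh_u$; this simultaneously establishes maximality of $\clh_u$ and will give uniqueness.

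Then I would set $\clh_{\neg u} \bydef \clh \ominus \clh_u$; since $\clh_u$ reduces $T$, so does $\clh_{\neg u}$, and $T|_{\clh_{\neg u}}$ is a contraction. To see it is c.n.u., suppose $\cll \subseteq \clh_{\neg u}$ is a nonzero $T$-reducing subspace with $T|_\cll$ unitary; by the converse direction above $\cll \subseteq \clh_u$, forcing $\cll = \{0\}$, a contradiction. For uniqueness: if $\clh = \clk_u \oplus \clk_{\neg u}$ is another such decomposition, then $\clk_u \subseteq \clh_u$ by the maximality argument, while $\clk_{\neg u} = \clh \ominus \clk_u \supseteq \clh_{\neg u}$; but $T|_{\clh_u}$ unitary and $\clh_u \ominus \clk_u$ being $T$-reducing inside $\clh_u$ would make $T$ unitary there too, hence $\clh_u \ominus \clk_u \subseteq \clk_{\neg u}$ carries a unitary part of the c.n.u.\ operator $T|_{\clk_{\neg u}}$, forcing $\clh_u \ominus \clk_u = \{0\}$, so $\clk_u = \clh_u$ and the decomposition is unique.

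The main obstacle, and the one step worth spelling out carefully rather than waving through, is the verification that $\clh_u$ is invariant under \emph{both} $T$ and $T^*$ simultaneously — the subtlety is extracting from the \emph{two-sided} norm conditions the algebraic identities $T^*Th = h$ and $TT^*h = h$ for $h \in \clh_u$, and then checking these persist after applying $T$ or $T^*$. Everything else (linearity and closedness of $\clh_u$, the c.n.u.\ property, uniqueness) follows formally once this is in place; since this is the classical Sz.-Nagy--Foias canonical decomposition, one may alternatively simply cite \cite{NF-BOOK} for the bulk of the argument and only record the explicit description of $\clh_u$, which is the form actually needed in the sequel.
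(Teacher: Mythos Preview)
Your outline is correct, but note that the paper does not actually prove this statement: Theorem~2.1 is simply \emph{recalled} from \cite{NF-BOOK} with no argument given, and only the reformulation in Theorem~2.2 receives a proof in the paper. Your final remark --- that one may alternatively just cite \cite{NF-BOOK} --- is precisely what the paper does, so in that sense your proposal is consistent with (and more detailed than) the paper's treatment.
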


The above theorem can be rewritten as follows:

\begin{thm}\label{Wold-type-contraction}
Let $T$ be a contraction on a Hilbert space $\mathcal{H}$. Then
$\clh$ decomposes as a direct sum of two $T$-reducing subspaces
\begin{align*}
&\clh_u = \bigcap_{m \in \mathbb{Z}_{+}}[ \cln (I-T^{*m}T^m) \cap \cln (I-T^{m}T^{*m})], \\
\mbox{and}\\
& \clh_{\neg u}:= \clh \ominus \clh_u = \bigvee_ {m \in \mathbb{Z}_{+}} \{\clr(I-T^{*m}T^m) 
\cup \clr(I-T^m T^{*m}) \}.
\end{align*}
Also $T_u=T|_{{\mathcal{H}}_u}$ and $T_{\neg u}=T|_{{\mathcal{H}}_{\neg u}}$ 
are called unitary part and c.n.u. part of $T$, respectively.
\end{thm}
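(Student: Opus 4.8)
The plan is to show that Theorem \ref{Wold-type-contraction} is nothing more than a reformulation of the canonical decomposition theorem (the preceding Theorem), phrased in terms of kernels and ranges rather than norm conditions. First I would start from the canonical decomposition $\clh = \clh_u \oplus \clh_{\neg u}$ with
\[
\clh_u = \{ h \in \clh : \|T^m h\| = \|h\| = \|T^{*m} h\| \text{ for all } m = 1, 2, \ldots \},
\]
and verify that this set coincides with $\bigcap_{m \in \mathbb{Z}_+}[\cln(I - T^{*m}T^m) \cap \cln(I - T^m T^{*m})]$. The key pointwise observation is that for a contraction $T$ (hence $T^m$ a contraction for each $m$), $I - T^{*m}T^m$ is a positive operator, so $\langle (I - T^{*m}T^m) h, h \rangle = \|h\|^2 - \|T^m h\|^2 \geq 0$; consequently $(I - T^{*m}T^m) h = 0$ if and only if $\langle (I - T^{*m}T^m) h, h \rangle = 0$ if and only if $\|T^m h\| = \|h\|$. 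The analogous equivalence holds for $I - T^m T^{*m}$ and the condition $\|T^{*m} h\| = \|h\|$. Intersecting over all $m \geq 1$ (the case $m = 0$ being vacuous) yields exactly the description of $\clh_u$, and since $\clh_u$ is $T$-reducing by the canonical decomposition theorem, nothing further need be checked there.

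Next I would handle $\clh_{\neg u}$. By definition $\clh_{\neg u} = \clh \ominus \clh_u$, so it suffices to show
\[
\clh \ominus \bigcap_{m \in \mathbb{Z}_+}[ \cln(I - T^{*m}T^m) \cap \cln(I - T^m T^{*m})] = \bigvee_{m \in \mathbb{Z}_+} \{ \clr(I - T^{*m}T^m) \cup \clr(I - T^m T^{*m}) \}.
\]
This is a routine application of De Morgan's law for closed subspaces together with the identity $\cln(A)^\perp = \overline{\clr(A^*)}$, which for a self-adjoint $A$ reads $\cln(A)^\perp = \overline{\clr(A)}$. Indeed, $(\bigcap_\alpha \cln(A_\alpha))^\perp = \overline{\bigvee_\alpha \cln(A_\alpha)^\perp} = \bigvee_\alpha \overline{\clr(A_\alpha)}$, and since $I - T^{*m}T^m$ and $I - T^m T^{*m}$ are self-adjoint, taking the orthogonal complement of the intersection over the doubly-indexed family $\{I - T^{*m}T^m, I - T^m T^{*m} : m \in \mathbb{Z}_+\}$ produces precisely the stated closed linear span of their ranges. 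That $\clh_{\neg u}$ is $T$-reducing and that $T|_{\clh_{\neg u}}$ is c.n.u. are both inherited directly from the canonical decomposition theorem.

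There is no serious obstacle here; the statement is a translation exercise. The only point requiring a little care is the equivalence $(I - T^{*m}T^m)h = 0 \iff \|T^m h\| = \|h\|$, which uses positivity of $I - T^{*m}T^m$ (equivalently, that $T^m$ is a contraction, which follows since $T$ is) and the elementary fact that a positive operator $B$ satisfies $Bh = 0$ iff $\langle Bh, h \rangle = 0$ — seen by writing $\langle Bh, h\rangle = \|B^{1/2}h\|^2$. Everything else is bookkeeping with orthogonal complements of families of closed subspaces. I would present the proof in two short paragraphs corresponding to the identifications of $\clh_u$ and $\clh_{\neg u}$, invoking the canonical decomposition theorem for the reducing property and the unitary/c.n.u.\ dichotomy.
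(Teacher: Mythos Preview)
Your proposal is correct and follows essentially the same approach as the paper: both start from the canonical decomposition's norm characterization of $\clh_u$, use positivity of $I - T^{*m}T^m$ (via the square root) to convert the norm conditions $\|T^m h\| = \|h\|$ and $\|T^{*m}h\| = \|h\|$ into the kernel conditions, and then take orthogonal complements with the identity $\cln(A)^\perp = \overline{\clr(A)}$ for self-adjoint $A$ to obtain $\clh_{\neg u}$. The paper's version is slightly more elaborate in notation (introducing $T(m)$ and the defect operators $D_{T(m)}$ explicitly), but the substance is identical.
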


\begin{proof}
Suppose that $T \in \clb(\clh)$ is a contraction. Define the defect operators of $T$ as
\[
{D_T}=(I - T^*T)^{1/2}  \ and \  {D_{T^*}}=(I - T T^*)^{1/2}. 
\]
Clearly, ${D_T}$ and ${D_{T^*}}$ are positive operators and bounded by 0 and 1.	
Now for each  $ h \in \clh$ 
\[
\langle D_T^2 h, h \rangle =0 \iff D_T h =0 \iff \| Th \|=\| h \|. 
\]
Therefore, the space $\{ h \in \clh: \|Th\|=\|h\|\}$ coincides with 
$\cln({D_T})=\{h \in \clh: D_Th=0\}$. Consider for each  $m \in \mathbb{Z}$
\begin{align*}
		T(m)=
		\begin{cases}
			T^m  & \text{if} \ m \geq 1,\\
			I    &  \text{if} \ m=0,\\
			T^{*|m|} & \text{if}~~ m \leq -1.
		\end{cases}
\end{align*}
Then for each fixed $m$ in $\mathbb{Z}$, the space 
$\{h \in \clh : \|T(m)h\| = \|h \| \}$ is same as $\cln({D_{T(m)}})
=\{h \in \clh : {D_{T(m)}}h=0\}$. Thus the space $\clh_u$ can be rewritten as 
\begin{align*}
\clh_u 
&=\{ h \in \clh : \|T^m h\|= \|h\|= \|{T^*}^m h\| \ \text{for} \  m \in \mathbb{{N}}\} \\
&=\{h \in \clh: \|T(m)h\|=\|h\| \ \text{for} \ m \in \mathbb{Z}\} \\
&= \bigcap_{m=-\infty}^{\infty} \cln({D_{T(m)}}), 
\end{align*} 
where
\[	
\ D_{T(m)}= 
\begin{cases}
(I-T^{*m}T^m)^{\frac{1}{2}} & \text{if} \ m \geq 0,\\
(I-T^{|m|}T^{*|m|})^{\frac{1}{2}} & \text{if} \ m \leq -1.
\end{cases}
\]	
Since for each $m \in \mathbb{Z}$ the operator $D_{T(m)}$ is positive
on $\clh$, $ \cln (D_{T(m)})= \cln ({D^2_{T(m)}})$. Therefore, 
\begin{align*}
		\cln({D_{T(m)}}) =
		\begin{cases}
			\cln (I-T^{*m}T^m) & \text{if} \ m \geq 0,\\
			\cln (I-T^nT^{*n}) & \text{if} \ n=|m|, \ m \leq -1.
		\end{cases}
\end{align*}
Hence
\begin{align*}
\clh_u &= \bigcap_{m=0}^{\infty} \cln (I-T^{*m}T^m) \cap \bigcap_{n=1}^{\infty}   
\cln (I-T^{n}T^{*n}) \\
&= \bigcap_{m \in \mathbb{Z}_{+}}[ \cln (I-T^{*m}T^m) \cap \cln (I-T^{m}T^{*m})]
\end{align*}
and
\begin{align*}
\clh_{\neg u}:= \clh_u ^{\perp}
&= [\bigcap_{m \in \mathbb{Z}_{+}} \{ \cln (I-T^{*m}T^m) \cap 
\cln (I-T^{m}T^{*m})\} ]^{\perp} \\
&=\bigvee_ {m \in \mathbb{Z}_{+}} \{ \clr(I-T^{*m}T^m) \cup \clr(I-T^{m}T^{*m}) \}.
\end{align*}
	
This completes the proof.	 
\end{proof}

\begin{rem}
If a contraction $T\in \clb(\clh)$ is a power partial isometry, 
then for each $n \in \mathbb{Z}_{+}$, $T^{*n}T^n= P_{\clr(T^{*n})}$ and
$T^nT^{*n}= P_{\clr(T^n)}$, where $P_{\clr(T^{*n})}$ and $P_{\clr(T^n)}$ 
are the orthogonal projections of $\clh$ onto ${\clr(T^{*n})}$ and ${\clr(T^n)}$, 
respectively. Now from Theorem \ref{Wold-type-contraction}, we have  
\begin{align*}
\clh_u &= \bigcap_{n \in \mathbb{Z}_{+}}[\cln(I- T^{*n} T^{n}) \cap \cln(I- T^{n} T^{*n} )] \\
&=\bigcap_{n \in \mathbb{Z}_{+}}[\cln(I- P_{\clr(T^{*n})}) \cap \cln(I-  P_{\clr(T^n) })]\\
&=\bigcap_{n \in \mathbb{Z}_{+}}[\clr( P_{\clr(T^{*n})} ) \cap \clr(P_{\clr(T^{n})} )] \\
&=\bigcap_{n \in \mathbb{Z}_{+}} [T^{*n} \clh \cap  T^{n} \clh],
\end{align*}
and 
\begin{align*}
\clh_{\neg u}  &=\bigvee_ {n \in \mathbb{Z}_{+}} \{ \clr(I-T^{*n}T^n) \cup \clr(I-T^{n}T^{*n}) \} \\
&= \bigvee_ {n \in \mathbb{Z}_{+}} \{ \cln(P_{\clr(T^{*n})}) \cup  \cln(P_{\clr(T^{n})}) \}.
\end{align*}
\end{rem}

\begin{rem}
Let $T$ be an isometry on $\clh$. Then $T^*T=I$ and $T^*\clh = \clh$. 
Since every isometry is a power partial isometry, from the last remark, 
we readily have the unitary part $\clh_u$ of $T$ as
\begin{align*}
\clh_u =  \bigcap_{n \in \mathbb{Z}_{+}} T^n \clh,
\end{align*}
and the c.n.u part $\clh_{\neg u}$ becomes
\begin{align*}
\clh_{\neg u}  =\bigvee_ {n \in \mathbb{Z}_{+}} \{\clr(I-T^{n}T^{*n}) \}
= \bigvee_ {n \in \mathbb{Z}_{+}} \{  \cln(P_{\clr(T^{n})}) \}.	
\end{align*}
Again for $n \geq 1$ 
\begin{align*}\label{equation 0}
\clr(I-T^{n}T^{*n})&= \clh \ominus T^{n}T^{*n}\clh 
= \clh \ominus T^{n}\clh  \nonumber \\
&= (\clh \ominus T\clh) \oplus (T\clh \ominus T^2\clh) 
\oplus \cdots \oplus (T^{n-1} \clh \ominus T^n \clh) \nonumber \\
&= \cln ( T^*) \oplus T \cln ( T^*) \oplus \cdots \oplus T^{n-1}\cln ( T^*) \nonumber\\
&= \bigoplus_{k=0}^{n-1} T^k \cln ( T^*).
\end{align*}
Since $ \clr(I-TT^*) \subseteq \clr(I-T^2 T^{*2}) \subseteq \cdots \subseteq 
\clr(I-T^n T^{*n}) \subseteq \cdots $, 
\begin{align*}
\clh_{\neg u} =\bigvee_{n=1}^{\infty} \Big\{ \clr(I-T^{n}T^{*n}) \Big\}
= \bigoplus_{n=0}^{\infty}  T^n \cln (T^*).
\end{align*}
Therefore, the canonical decomposition of $T$ coincides 
with the {\it Wold decomposition}.
\end{rem}

%%%%%%%%%%%%%%%%%%%%%%%%%%%%%%%%%%%%%%%%%%%%%%%%%%%%%%%%%%%%%%%%%%%%%%%%%
%%%%%%%%%%%%%%%%%%%%%%%%%%%%%%%%%%%%%%%%%%%%%%%%%%%%%%%%%%%%%%%%%%%%%%%%%
%%%%%%%%%%%%%%%%%%%%%%%%%%%%%%%%%%%%%%%%%%%%%%%%%%%%%%%%%%%%%%%%%%%%%%%%%
\section{Decomposition for $\clu_2$-twisted contractions}

In this section, we achieve the explicit orthogonal 
decomposition spaces for pairs of doubly twisted contractions 
(in particular, doubly twisted isometries) on Hilbert 
spaces. Our approach is different and the results unify 
all the existing results in the literature studied by 
many researchers, like S\l oci\'{n}ski \cite{SLOCINSKI-WOLD}, 
Burdak \cite{BURDAK-DECOMPOSITION}, Popovici \cite{POPOVICI-WOLD TYPE}, 
\cite{POPOVICI-On}, Catepill\'{a}n et al. \cite{CPS-Multiple}, 
and the recent results of Jeu and Pinto \cite{JP-NON COMMUTING},
Rakshit, Sarkar, and Suryawanshi \cite{RSS-TWISTED ISOMETRIES}. 

We shall work in the following fixed set-up.

\begin{defn}($\clu_n$-twisted contractions)
Let $n>1$ and $\{U_{ij}\}$ for $1 \leq i < j \leq n$ be $\binom{n}{2}$ 
commuting unitaries on a Hilbert space $\clh$ such that $U_{ji}:=U^*_{ij}$.
We say that an $n$-tuple of contractions $(T_1, \dots, T_n)$ on $\clh$ 
is a $\clu_n$-twisted contraction with respect to a twist $\{U_{ij}\}_{i<j}$ 
if 
\[
T_iT_j=U_{ij}T_jT_i; \hspace{1cm}\hspace{1cm} T_i^*T_j= U^*_{ij}T_jT_i^* 
\hspace{1cm} \mbox{and} \hspace{1cm} T_kU_{ij} =U_{ij}T_k
\]
for all $i,j,k=1, \dots, n$ and $i \neq j$. We simply say that the tuple 
$(T_1, \dots, T_n )$ is a $\clu_n$-twisted contractions without 
referencing the twist $\{U_{ij}\}_{1 \leq i<j \leq n}$.
\end{defn}
\NI
In particular, if $U_{ij}=I$ for all $1 \leq i < j \leq n$, 
then the tuple $(T_1, \ldots, T_n)$ is said to be doubly 
commuting contraction, that is, $T_iT_j= T_j T_i$ and 
$T_iT_j^* = T_j^*T_i$ for $1\leq i<j\leq n$. If $n=2$, 
then we shall refer to $(T_1, T_2)$ as a pair of 
\textit{doubly twisted contraction} or \textit{$\clu_2$-twisted 
contraction} on $\clh$. If $n=1$, then $n$-tuple reduces 
to a single contraction.

\begin{rem}
Let $(T_1, \dots, T_n)$ be an $n$-tuple of isometries on $\clh$.  
For $1 \leq i < j \leq n$, let  $\{U_{ij}\}$ be $\binom{n}{2}$ 
commuting unitaries on a Hilbert space $\clh$ such that 
$U_{ji}:=U^*_{ij}$. Then the relation 
$T_i^*T_j=U^*_{ij} T_j T_i^*$ and $T_kU_{ij} =U_{ij}T_k$
implies $T_iT_j={U}_{ij} T_j T_i $ for all $i,j,k=1, \dots, n$ 
and $i \neq j$ (see \cite{JPS-q COMMUTING ISOMETRIES}, \cite{RSS-TWISTED ISOMETRIES}). 
However, this fact is not true for an $n$-tuple of contractions.  
\end{rem}

The following result is simple, but plays an important role in the sequel.
\begin{lem}\label{commuting operators-cor1}
Let $(T_1, \dots, T_n)$ be an $n$-tuple of $\clu_n$-twisted contractions 
on a Hilbert space $\clh$, and let $l,m \in \mathbb{Z}_{+}$. Then for 
all $i \neq j$,
\begin{enumerate}
\item $T_i$ commutes with $T_j^mT_j^{*m}$ and $T_j^{*m}T_j^{m}$;
\item $T_i^*$ commutes with   $T_j^mT_j^{*m}$ and  $T_j^{*m}T_j^{m}$;
\item $T_i^lT_i^{*l}$, $T_i^{*l} T_i^l$ commute with the operators 
	 $(I-T_j^mT_j^{*m})$ and $(I -T_j^{*m}T_j^{m})$.
\end{enumerate}
\end{lem}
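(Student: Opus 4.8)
The plan is to derive everything from the two twisting relations $T_iT_j=U_{ij}T_jT_i$ and $T_i^*T_j=U_{ij}^*T_jT_i^*$ together with the fact that each $U_{ij}$ (hence each $U_{ij}^*=U_{ji}$) commutes with every $T_k$ and $T_k^*$ (the latter by taking adjoints in $T_kU_{ij}=U_{ij}T_k$). The key preliminary observation I would record first is that for $i\neq j$ and any $m\in\mathbb{Z}_+$ one has
\begin{align*}
T_iT_j^m = U_{ij}^m\,T_j^m T_i,\qquad T_i^*T_j^m = (U_{ij}^*)^m\,T_j^m T_i^*,
\end{align*}
which follow by an immediate induction on $m$, pulling one $T_j$ (resp.\ $T_i$, $T_i^*$) past at a time and using that the $U_{ij}$'s are central with respect to the $T_k$'s so the accumulated unitary factors can be collected on the left. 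Taking adjoints of the second identity also gives $T_j^{*m}T_i = U_{ij}^m\,T_i T_j^{*m}$, i.e.\ $T_i$ commutes with $T_j^{*m}$ up to the scalar-like unitary $U_{ij}^m$, and similarly $T_i$ commutes with $T_j^m$ up to $U_{ij}^m$.

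For part (1), I would compute $T_i\,(T_j^{*m}T_j^m)$ by moving $T_i$ rightward: using $T_iT_j^{*m}=U_{ij}^{-m}T_j^{*m}T_i$ and then $T_iT_j^m=U_{ij}^mT_j^mT_i$, the two unitary factors $U_{ij}^{-m}$ and $U_{ij}^m$ cancel, leaving $T_j^{*m}T_j^mT_i$. The same bookkeeping with $T_i\,(T_j^mT_j^{*m})$ again produces cancelling factors $U_{ij}^m$ and $U_{ij}^{-m}$. Part (2) is identical with $T_i$ replaced by $T_i^*$, using $T_i^*T_j^m=(U_{ij}^*)^mT_j^mT_i^*$ and its adjoint-derived companion for $T_j^{*m}$; again the exponents on $U_{ij}$ are opposite on the two passes and annihilate. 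Part (3) is then a formal consequence: since by (1) and (2) both $T_i$ and $T_i^*$ commute with $T_j^mT_j^{*m}$ and with $T_j^{*m}T_j^m$, taking products and powers shows $T_i^lT_i^{*l}$ and $T_i^{*l}T_i^l$ commute with those operators, hence with $I-T_j^mT_j^{*m}$ and $I-T_j^{*m}T_j^{m}$ (the identity commutes with everything).

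The only thing requiring genuine care — the "main obstacle," such as it is — is the order in which the scalar unitary factors $U_{ij}^{\pm m}$ are accumulated and the verification that they are indeed central enough to be freely moved and combined; in particular one must be slightly careful that the commutation $T_kU_{ij}=U_{ij}T_k$ is only assumed for the $T_k$'s and their adjoints, not a priori between different $U_{ij}$ and $U_{kl}$, but since each individual identity in (1)–(3) involves only the single unitary $U_{ij}$ this never comes up. I would present the $m=1$ case of the four auxiliary commutation-up-to-$U_{ij}$ identities explicitly, note the trivial induction, and then give the one- or two-line cancellation computation for each of (1) and (2), deducing (3) formally. This keeps the proof short and makes transparent that $\clu_n$-twisted contractions behave, at the level of the "diagonal" products $T_j^mT_j^{*m}$ and $T_j^{*m}T_j^m$, exactly like doubly commuting contractions.
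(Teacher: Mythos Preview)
Your proof is correct and follows essentially the same approach as the paper's: one moves $T_i$ (or $T_i^*$) past $T_j^{m}$ and $T_j^{*m}$, accumulating the unitary factors $U_{ij}^{\pm m}$, which then cancel because they are central with respect to the $T_k$'s; parts (2) and (3) are obtained by adjoints and by combining (1)--(2), exactly as the paper does. The only difference is that you state the intermediate identities $T_iT_j^m=U_{ij}^mT_j^mT_i$, $T_iT_j^{*m}=U_{ij}^{-m}T_j^{*m}T_i$ (and their $T_i^*$-versions) explicitly before using them, whereas the paper simply writes out the two-step cancellation inline; this is purely expository.
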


\begin{proof}
Suppose that $l,m \in \mathbb{Z}_{+}$. Now for $i \neq j$, using the 
definition repeated times, we have
\begin{align*}
& T_iT_j^mT_j^{*m}= U_{ij}^m T_j^m T_i T_j^{*m}= U_{ij}^m {U}_{ij}^{*m} 
T_j^m  T_j^{*m}T_i= T_j^m  T_j^{*m}T_i  \\
\mbox{and} \hspace{0.5cm}
& T_iT_j^{*m}T_j^m= {U}_{ij}^{*m}   T_j^{*m}T_i T_j^m	= {U}_{ij}^{*m} 
U_{ij}^m  T_j^{*m} T_j^m T_i =T_j^{*m} T_j^m T_i.
\end{align*}
Hence the first part is proved. 
	
Second part follows from the first part by just taking the adjoint 
of those operators. Using the part (1) and (2), we can easily prove 
the last part.	
\end{proof}

We shall first concentrate on pairs of \textit{doubly twisted contractions} 
with some examples on Hilbert spaces $\clh$ and their decomposition.

\begin{ex}\label{Example-doubly non commuting}
Let $H^2(\mathbb{D})$ denotes as the Hardy space over the unit disc 
$\mathbb{D}$. The weighted shift $M_z^{\alpha}$ on $H^2(\mathbb{D})$ is 
defined by $M_z^{\alpha}(f)= \alpha zf$ for all $f \in  H^2(\mathbb{D})$, 
where $z$ is the co-ordinate function and $|\alpha| \leq 1$. Now the Hardy 
space over the bidisc $\mathbb{D}^2$, denoted by $H^2(\mathbb{D}^2)$, can be 
identified with $H^2(\mathbb{D}) \otimes  H^2(\mathbb{D})$ through the canonical 
unitary $\Gamma:  H^2(\mathbb{D}) \otimes H^2(\mathbb{D}) \rightarrow H^2(\mathbb{D}^2)$
defined by $\Gamma(z^{m_1} \otimes z^{m_2})= z_1^{m_1} z_2^{m_2}$ for 
$(m_1, m_2) \in \mathbb{Z}_+^2$. 
	
For each fixed $ r \in S^1:=\{z \in \mathbb{C}: |z|=1 \},$ we define 
an operator $A_r$ on $H^2(\mathbb{D})$ as
\[
A_r z^n= \frac{r^n}{2} z^n \hspace{1cm}  (n \in \mathbb{Z}_+), 
\]
where $\{1,z,z^2, \dots \}$ is an orthonormal basis for $H^2(\mathbb{D})$. 
Then 
\begin{equation*}
( M_z^{\alpha} A_r)(z ^n)=  \frac{\alpha r^n}{2} z^{n+1} \quad \text{and} \quad
( A_r M_z^{\alpha})(z^n) = \frac{\alpha r^{n+1}}{2} z^{n+1} ~~ \text{for}~~ n \in \mathbb{Z}_+.
\end{equation*}
Again
\begin{equation*}
\big([M_z^{\alpha}]^* A_r\big)(z ^n)=
\begin{cases}
\frac{\bar{\alpha} r^n}{2} z^{n-1}, & \text{if $n \geq 1$}\\
0 & \text{if $n = 0$},
\end{cases}
\end{equation*}
and 
\begin{equation*}
		\big(A_r [M_z^{\alpha}]^*\big)(z ^n)=
		\begin{cases}
			\frac{\bar{\alpha} r^{n-1}}{2} z^{n-1} & \text{if $n \geq 1$}\\
			0 & \text{if $n = 0$}.
		\end{cases}
\end{equation*}
Hence $ A_r M_z^{\alpha}=rM_z^{\alpha}A_r$ and    
$ [M_z^{\alpha}]^*A_r= rA_r [M_z^{\alpha}]^*$. 
We now define $T_1$ and $T_2$ on $H^2(\mathbb{D}^2)$ such that 
\[
T_1=  A_r \otimes  M_z^{\alpha}
	\ \ \ \text{and} \quad 
T_2=  M_z^{\alpha} \otimes I_{H^2(\mathbb{D})}. 
\] 
Therefore, we can check that $(T_1,T_2)$ is a pair of contractions on 
$H^2(\mathbb{D}^2)$. Moreover,
\[
T_1T_2=  A_r M_z^{\alpha} \otimes M_z^{\alpha}=  r M_z^{\alpha} A_r 
\otimes M_z^{\alpha}= r(  M_z^{\alpha} A_r \otimes  M_z^{\alpha})= rT_2T_1. 
\]
and
\[
T_2^*T_1= [M_z^{\alpha}]^*A_r \otimes  M_z^{\alpha} = r A_r [M_z^{\alpha}]^* 
\otimes M_z^{\alpha} = r(A_r [M_z^{\alpha}]^* \otimes M_z^{\alpha})=rT_1 T_2^*. 
\]
Consider $\clh=H^2(\mathbb{D}^2) \oplus H^2(\mathbb{D}^2)$. We now define 
two contractions on $\clh$ as $T_1'=\diag(T_1,T_2)$ and $T_2'=\diag(T_2,T_1)$. 
Set $U=\diag(rI_{H^2(\mathbb{D}^2)},\bar{r} I_{H^2(\mathbb{D}^2)})$, $|r|=1$. 
Clearly, $U$ is unitary on $\clh$ and 
\[
	T_1'T_2'=\begin{bmatrix}
		T_1T_2 & 0\\
		0 & T_2T_1
	\end{bmatrix}
	=\begin{bmatrix}
		rT_2T_1 & 0\\
		0 & \bar{r}T_1T_2
	\end{bmatrix}
	=\begin{bmatrix}
		rI_{\clh^2(\mathbb{D}^2)} & 0\\
		0 & \bar{r}I_{\clh^2(\mathbb{D}^2)}
	\end{bmatrix} T_2'T_1'=U T_2'T_1'
\]
and
\[
	T_2'^*T_1'
	=\begin{bmatrix}
		T_2^*T_1 & 0\\
		0 & T_1^*T_2
	\end{bmatrix}
	=\begin{bmatrix}
		rT_1T_2^* & 0\\
		0 & \bar{r}T_2T_1^*
	\end{bmatrix}
	=\begin{bmatrix}
		rI_{\clh^2(\mathbb{D}^2)} & 0\\
		0 & \bar{r}I_{\clh^2(\mathbb{D}^2)}
	\end{bmatrix} T_1'T_2'^*= UT_1'T_2'^*.
\]
Again $T_1'U =UT_1'$, and $T_2'U = UT_2'$. So it follows that $(T_1',T_2')$ 
is a $\clu_2$-twisted contractions on $\clh$ with a twist $\clu_2=\{U\}$.
\end{ex}

\begin{ex}
Let $H^2_{\cle}(\mathbb{D}^2)$ denotes as the $\cle$-valued Hardy space 
over the unit bidisc $\mathbb{D}^2$, where $\cle$ is any Hilbert space. 
We can also identify $H^2_{\cle}(\mathbb{D}^2)$ as $H^2(\mathbb{D}^2) \otimes \cle$. 
The weighted shift operators $M_{z_i}^{\alpha_i}$ is defined by
$M_{z_i}^{\alpha_i} f= \alpha_i z_i f$  for $f \in H^2_{\cle}(\mathbb{D}^2)$, 
where $z_i \in \mathbb{D}$, $|\alpha_i|\leq 1$ for $i=1,2.$  
We now define operators $T_1$ and $T_2$ on $H^2_{\cle}(\mathbb{D}^2)$ as 
\[
T_1=M_{z_1}^{\alpha_1} \hspace{1cm} \text{and}  \hspace{1cm}
T_2=M_{z_2}^{\alpha_2} D[U],
\]
where $U$ is unitary on $\cle$ and $D[U]$ on $H^2_{\cle}(\mathbb{D}^2)$ is defined by 	
\[
D[U] (z_1^{m_1}z_2^{m_2} \eta)= z_1^{m_1} z_2^{m_2}(U^{m_1} \eta) \quad \mbox{for} 
\quad (m_1,m_2) \in \mathbb{Z}_+^2, \ \eta \in  \cle.
\] 
It is easy to check that $(T_1,T_2)$ is a pair of contractions on 
$H^2_{\cle}(\mathbb{D}^2)$. Moreover,
\begin{align*}
	T_2T_1(z_1^{m_1}z_2^{m_2} \eta)
	=M_{z_2}^{\alpha_2} D[U](\alpha_1 z_1^{m_1+1}z_2^{m_2} \eta)
	=\alpha_1 \alpha_2 z_1^{m_1+1}z_2^{m_2+1} U^{m_1+1}\eta,
\end{align*}
and
\begin{align*}
   	T_1T_2(z_1^{m_1}z_2^{m_2} \eta)
   	&=M_{z_1}^{\alpha_1} (\alpha_2 z_1^{m_1}z_2^{m_2+1}U^{m_1} \eta)\\
   	&=\alpha_1 \alpha_2 z_1^{m_1+1}z_2^{m_2+1}U^{m_1} \eta \\
   	&=(I_{H^2({\mathbb{D}})} \otimes U^*) (\alpha_1 \alpha_2 
   	z_1^{m_1+1}z_2^{m_2+1} U^{m_1+1}\eta)\\
   	&=\widetilde{U} T_2T_1 (z_1^{m_1}z_2^{m_2} \eta)
\end{align*}
where $\widetilde{U}=(I_{H^2({\mathbb{D}^2})} \otimes U^*)$ is unitary 
on $H^2_{\cle}(\mathbb{D}^2)$ (or on $H^2(\mathbb{D}^2) \otimes {\cle}$). 
Therefore 
\[
T_1T_2=\widetilde{U} T_2T_1
\]
on $H^2_{\cle}(\mathbb{D}^2)$. Again 
\begin{align*}
T_1^*T_2(z_1^{m_1} z_2^{m_2}\eta)
&=[M_{z_1}^{\alpha_1}]^*( \alpha_2 z_1^{m_1} z_2^{m_2+1} U^{m_1}\eta) \\
&=\begin{cases}
\bar{\alpha_1} \alpha_2 z_1^{m_1-1} z_2^{m_2+1} U^{m_1}\eta  & \text{if $m_1 \geq 1$}\\
0 & \text{if $m_1 = 0$},
\end{cases}
\end{align*}
and 
\begin{align*}
T_2T_1^*(z_1^{m_1} z_2^{m_2}\eta)
&= \begin{cases}
  M_{z_2}^{\alpha_2} D[U] (\bar{\alpha_1}z_1^{m_1-1} z_2^{m_2}\eta) &
   \text{if $m_1 \geq 1$}\\
  0 & \text{if $m_1 = 0$}
\end{cases} \\
&=\begin{cases}
	\bar{\alpha_1} \alpha_2 z_1^{m_1-1} z_2^{m_2+1} U^{m_1-1}\eta  & \text{if $m_1 \geq 1$}\\
	0 & \text{if $m_1 = 0$}.
\end{cases}
\end{align*}	
Therefore, $T_1^*T_2(z_1^{m_1} z_2^{m_2}\eta)= (I_{H^2({\mathbb{D}^2})} 
 \otimes U)T_2T_1^*(z_1^{m_1} z_2^{m_2}\eta)$, that is,
$T_1^*T_2=\widetilde{U}^*T_2T_1^*$ on $H^2_{\cle}(\mathbb{D}^2)$. Moreover, 
$T_i \widetilde{U}=\widetilde{U} T_i$ for $i=1,2$. Hence $(T_1,T_2)$ is 
a $\clu_2$-twisted contraction on $H^2_{\cle}(\mathbb{D}^2)$.

\NI
In particular, we take $\cle= \ell^2(\mathbb{Z})$ and the bilateral shift 
$W$ on $\ell^2(\mathbb{Z})$. Then the above pair $(T_1,T_2)$ is 
a {\it{doubly twisted contraction}} with respect to the twist 
$\{I_{H^2({\mathbb{D}^2})} \otimes W^*\}$ on the Hilbert space 
$H^2_{ \ell^2(\mathbb{Z}) }(\mathbb{D}^2)$.
\end{ex}

\begin{ex}
For each fixed $ r \in S^1,$ we define a weighted shift operator
$B_r$ on the Hardy space $H^2(\mathbb{D})$ such that
\[
B_r z^n= r^{n+1} z^{n+1}\hspace{1cm}  (n \in \mathbb{Z}_+) 
\]
where $\{1,z,z^2, \dots \}$ is an orthonormal basis for $H^2(\mathbb{D})$. 
Let $M_z$ be the multiplication operator on $H^2(\mathbb{D})$ by the 
coordinate function $z$. Then 
\begin{equation*}
		( M_z B_r)(z ^n)= r^{n+1} z^{n+2} 
		\quad \text{and} \quad
		(  B_r M_z)(z^n)= r^{n+2} z^{n+2}
		\ \ \text{for  $n \in \mathbb{Z}_+$}.
\end{equation*}
Again
\begin{equation*}
( M_z^* B_r)(z ^n)= r^{n+1} z^{n} ~~~ \text{ $\forall \ n \geq 0$},\\
\end{equation*}
and 
\begin{equation*}
(B_r M_z^*)(z ^n)=
\begin{cases}
r^{n} z^{n} & \text{if $n \geq 1$}\\
0 & \text{if $n = 0$}.
\end{cases}
\end{equation*}
Hence $B_r M_z=rM_zB_r$ but $ M_z^*B_r \neq  rB_r M_z^*$. 
Now define $T_1,T_2$ on $H^2(\mathbb{D}^2)$ as
\[
	T_1=  B_r \otimes  M_z
	\ \ \ \text{and} \quad 
	T_2=  M_z \otimes I_{H^2(\mathbb{D})}. 
\] 
Then it is easy to see that $(T_1, T_2)$ is a pair of isometries on 
$H^2(\mathbb{D}^2)$. Also
\[
T_1T_2= B_rM_z \otimes M_z= r M_z B_r \otimes M_z= r(M_zB_r \otimes M_z)=rT_2T_1.
\]
On the other hand
\[
	T_2^*T_1= M_z^*B_r \otimes  M_z 
	\quad \text{and} \quad 
	T_1 T_2^*= B_r M_z^* \otimes M_z. 
\]
Therefore, $T_2^*T_1 \neq  rT_1T_2^*$ as $ M_z^*B_r \neq rB_r M_z^*$. 	
Consider $\clh=H^2(\mathbb{D}^2) \oplus H^2(\mathbb{D}^2)$. We now define 
two isometries on $\clh$ as $T_1'=\diag(T_1,T_2)$ and $T_2'=\diag(T_2,T_1)$. 
Set $U=\diag(rI_{H^2(\mathbb{D}^2)},\bar{r} I_{H^2(\mathbb{D}^2)})$, $|r|=1$. 
Clearly, $U$ is unitary on $\clh$ and 
\[
T_1'T_2'=U T_2'T_1', \quad T_1'U =UT_1',  \quad T_2'U = UT_2'
\]
but $T_2'^*T_1'\neq UT_1'T_2'^*$. Therefore, $(T_1',T_2')$ is not 
a pair of {\it{doubly twisted isometry}} on $\clh$ with a twist $\clu_2=\{U\}$.
\end{ex}

%%%%%%%%%%%%%%%%%%%%%%%%%%%%%%%%%%%%%%%%%%%%%%%%%%%%%%%%%%%%%%%%%

The following result will be used frequently in the sequel.
\begin{lem}\label{Reduce-subspace-prop1}
Let $(T_1, T_2)$ be a pair of {\it{doubly twisted}} operator on $\clh$ 
such that $T_1$ is a contraction. Let $\clh=\clh_u^1 \oplus \clh_{\neg u}^1$ 
be the canonical decomposition of contraction $T_1$. Then the decomposition
reduces $T_2$. 
\end{lem}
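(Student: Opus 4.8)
The plan is to combine the explicit formula for the unitary part of $T_1$ furnished by Theorem \ref{Wold-type-contraction} with the commutation relations of Lemma \ref{commuting operators-cor1}. Write
\[
\clh_u^1 = \bigcap_{m \in \mathbb{Z}_{+}}\bigl[\cln(I-T_1^{*m}T_1^m)\cap\cln(I-T_1^{m}T_1^{*m})\bigr],
\]
as in Theorem \ref{Wold-type-contraction}. Since $\clh_{\neg u}^1=(\clh_u^1)^\perp$, it is enough to prove that $\clh_u^1$ reduces $T_2$, i.e. that both $T_2$ and $T_2^*$ carry $\clh_u^1$ into itself; the statement for $\clh_{\neg u}^1$ then follows by passing to orthogonal complements.

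First I would note that the computations in the proof of Lemma \ref{commuting operators-cor1} rely only on the three defining identities $T_1T_2=U_{12}T_2T_1$, $T_1^*T_2=U_{12}^*T_2T_1^*$, $T_2U_{12}=U_{12}T_2$ together with their adjoints, and never on contractivity of $T_2$. Hence, even under the present weaker hypothesis that only $T_1$ is assumed to be a contraction, the conclusions of parts (1)--(3) of that lemma remain valid: $T_2$ and $T_2^*$ commute with $T_1^{m}T_1^{*m}$ and with $T_1^{*m}T_1^{m}$ for every $m\in\mathbb{Z}_{+}$, and therefore with each of $I-T_1^{*m}T_1^m$ and $I-T_1^{m}T_1^{*m}$.

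Next I would invoke the elementary fact that if a bounded operator $A$ commutes with $B$, then $A$ leaves $\cln(B)$ invariant, since $B(Ah)=A(Bh)=0$ whenever $Bh=0$. Applying this with $A\in\{T_2,T_2^*\}$ and $B\in\{I-T_1^{*m}T_1^m,\ I-T_1^{m}T_1^{*m}\}$ shows that, for each fixed $m$, both $T_2$ and $T_2^*$ leave $\cln(I-T_1^{*m}T_1^m)$ and $\cln(I-T_1^{m}T_1^{*m})$ invariant. Intersecting over all $m\in\mathbb{Z}_{+}$ gives $T_2(\clh_u^1)\subseteq\clh_u^1$ and $T_2^*(\clh_u^1)\subseteq\clh_u^1$, so $\clh_u^1$ reduces $T_2$, and consequently so does $\clh_{\neg u}^1$.

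I do not anticipate a real obstacle: the single point requiring a moment's attention is that Lemma \ref{commuting operators-cor1}, stated there for tuples of twisted \emph{contractions}, must be read as a purely algebraic statement so that it applies when $T_2$ is only a bounded operator satisfying the twisted relations; this is immediate from its proof. The remainder is just the standard observation that a subspace defined as an intersection of kernels of the building blocks $I-T_1^{*m}T_1^m$ and $I-T_1^{m}T_1^{*m}$ is automatically invariant under any operator commuting with all of those blocks, and that $T_2$ and $T_2^*$ do so precisely because of the doubly twisted commutation relations.
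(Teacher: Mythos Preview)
Your proposal is correct and follows essentially the same route as the paper: express $\clh_u^1$ via Theorem~\ref{Wold-type-contraction} and then invoke the commutation relations of Lemma~\ref{commuting operators-cor1} to conclude invariance. The paper's version is terser (it simply asserts $T_2(\clh_u^1)\subseteq\clh_u^1$ and $T_2(\clh_{\neg u}^1)\subseteq\clh_{\neg u}^1$ directly), whereas you spell out the kernel-invariance step and, more notably, address the point that Lemma~\ref{commuting operators-cor1} is stated for tuples of twisted \emph{contractions} while here only $T_1$ is assumed contractive---a genuine subtlety the paper glosses over.
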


\begin{proof}
Suppose that $(T_1, T_2)$ is a pair of {\it{doubly twisted}} operator 
with a twist $\clu_2=\{U\}$ on $\clh$ and $T_1$ is a contraction. 
Then from the above Theorem \ref{Wold-type-contraction}, we get   
$\clh= \clh_u^1  \oplus \clh_{\neg u}^1$, where $\clh_u^1, \clh_{\neg u}^1$ 
reduce $T_1$ and
\begin{align*}
		\clh_u^1 & = \bigcap_{m \in \mathbb{Z}_{+}}[ \cln (I-T_1^{*m}T_1^m) \cap 
		\cln (I-T_1^{m}T_1^{*m})], \\ 
		\clh_{\neg u}^1 & = \bigvee_ {m \in \mathbb{Z}_{+}} \{ \clr(I-T_1^{*m}T_1^m) 
		\cup \clr(I-T_1^m T_1^{*m}) \}.
\end{align*}
	Since $(T_1, T_2)$ is a pair of doubly twisted operator, using Lemma 
	\ref{commuting operators-cor1} we have $T_2(\clh_u^1)\subseteq 
	\clh_u^1$ and $T_2(\clh_{\neg u}^1)\subseteq \clh_{\neg u}^1$. 
	
	This finishes the proof.
\end{proof}

Let $\left( T_1, T_2 \right)$ be a pair of {\it{doubly twisted contraction}}
on $\clh$. Suppose that $\clh=\clh_u^1 \oplus \clh_{\neg u}^1$ 
is the canonical decomposition for contraction $T_1$ such that $T_1|_{\clh_u^1}$ 
is unitary and $T_1|_{\clh_{\neg u}^1}$ is completely non-unitary. 
The above Lemma \ref{Reduce-subspace-prop1} implies that
the subspaces $\clh_u^1$, $\clh_{\neg u}^1$ reduce the contraction $T_2$ 
and from Theorem \ref{Wold-type-contraction}, we get  
\begin{align*}
&\clh_u^1= \bigcap_{ m_1 \in \mathbb{Z}_{+}}[\cln(I- T_1^{* m_1} T_1^{ m_1}) 
	\cap \cln(I- T_1^{ m_1} T_1^{* m_1} )], \\
& \clh_{\neg u}^1= \bigvee_{m_1 \in \mathbb{Z}_{+}} \{ \clr(I-T_1^{* m_1}T_1^{ m_1})
	\cup \clr(I-T_1^{ m_1} T_1^{* m_1})
	\}.
\end{align*}
Since $T_2|_{\clh_u^1}$ is a contraction, the canonical decomposition yields 
$\clh_u^1=\clh_{uu} \oplus \clh_{u\neg u}$, where $T_1|_{\clh_{uu}}$, 
$T_1|_{\clh_{u\neg u}}$, $T_2|_{\clh_{uu}}$ are unitary and 
$ T_2|_{\clh_{u\neg u}}$ is c.n.u. 
Again $\clh_u^1$ reduces $T_2$, and hence $(T_2|_{\clh_u^1})^m= T_2^m|_{\clh_u^1}$ 
and $(T_2|_{\clh_u^1})^{*m} = T_2^{*m}|_{\clh_u^1}$ for any $m \in \mathbb{Z}_{+}$. 
Thus from Theorem \ref{Wold-type-contraction}, the subspaces $\clh_{uu}$ 
and $\clh_{u\neg u}$ can be written as
\begin{align*}
	\clh_{uu}
	& = \bigcap_{ m_2 \in \mathbb{Z}_{+}} [\cln (I_{\clh_u^1}-T_2^{* m_2}|_{\clh_u^1}  T_2^{ m_2}|_{\clh_u^1}) 
	\cap \cln (I_{\clh_u^1}-T_2^{ m_2}|_{\clh_u^1} T_2^{* m_2}|_{\clh_u^1})] \\
	& = \bigcap_{m_2 \in \mathbb{Z}_{+}} [\cln ((I -T_2^{* m_2} T_2^{ m_2})|_{\clh_u^1}) 
	\cap \cln ((I -T_2^{ m_2} T_2^{* m_2})|_{\clh_u^1})],\\
	\mbox{and}\\
	\clh_{u\neg u} & = \bigvee_{ m_2 \in \mathbb{Z}_{+}} \{ (I-T_2^{* m_2}T_2^{ m_2}) \clh_u^1 \cup
	(I-T_2^{ m_2}T_2^{* m_2}){\clh_u^1} \}.
\end{align*}
For the rest of the part, let $\clh_{\neg u}^1=\clh_{\neg u u} \oplus \clh_{\neg u \neg u}$ 
be the canonical decomposition for contraction $T_2|_{\clh_{\neg u}^1}$ 
such that $T_2|_{\clh_{\neg uu}}$ is unitary and $T_1|_{\clh_{\neg uu}}, 
T_1|_{\clh_{\neg u\neg u}}, T_2|_{\clh_{\neg u \neg u}}$ are c.n.u.
Since $\clh_{\neg u}^1$ is a $T_2$-reducing subspace, $\clh_{\neg u}^1$ reduces 
$(I-T_2^{* m_2}T_2^{ m_2})$ and $(I-T_2^{ m_2}T_2^{* m_2})$. Therefore 
from Theorem \ref{Wold-type-contraction}, the subspaces $\clh_{\neg u u}$ 
and $ \clh_{\neg u \neg u}$ can be written as 
\begin{align*}
\clh_{\neg u u} 
& = \bigcap_{ m_2 \in \mathbb{Z}_{+}} [\cln ( (I-T_2^{*m_2} T_2^{ m_2})|_{\clh_{\neg u }^1})
\cap \cln ((I_{\clh_{\neg u }^1}-T_2^{ m_2} T_2^{* m_2}) |_{\clh_{\neg u }^1})],\\ 
\mbox{and}\\
\clh_{\neg u\neg u} & = \bigvee_{ m_2 \in \mathbb{Z}_{+}} \{ (I-T_2^{* m_2}T_2^{ m_2}) \clh_{\neg u}^1 \cup 
(I-T_2^{ m_2}T_2^{* m_2}){\clh_{\neg u}^1} \}.
\end{align*}

To summarize the above, we have the following result:   
\begin{thm}\label{Wold-type-pair-contractions}
Let $\left( T_1, T_2 \right)$ be a pair of {\it{doubly twisted contraction}} 
on a Hilbert space $\clh$. Then there is a unique decomposition 
\[
\mathcal{H}=\mathcal{H}_{uu} \oplus \mathcal{H}_{u\neg u} \oplus 
\mathcal{H}_{\neg u u} \oplus \mathcal{H}_{\neg u \neg u},
\]
where $\mathcal{H}_{uu},  \mathcal{H}_{u\neg u}, \mathcal{H}_{\neg u u}$, and
$\mathcal{H}_{\neg u \neg u}$ are the subspaces reduce $T_1$ and $T_2$ such that
\begin{itemize}
\item	$ T_1|_{\mathcal{H}_{uu}},T_2|_{\mathcal{H}_{uu}}$ 
		are unitary,		
\item   $T_1|_{\mathcal{H}_{u\neg u}}$ is unitary and $T_2|_{\mathcal{H}_{u \neg u}}$ is c.n.u.,
		
\item   $T_1|_{\mathcal{H}_{\neg u u}}$ is c.n.u. and $T_2|_{\mathcal{H}_{\neg u u}}$ is unitary,
	
\item   $T_1|_{\mathcal{H}_{\neg u\neg u}}, T_2|_{\mathcal{H}_{\neg u\neg u}}$ are c.n.u. 
\end{itemize} 
Moreover, the orthogonal subspaces can be formulated as
\begin{align*}
&\clh_{uu}
=\bigcap_{ m_2 \in \mathbb{Z}_{+}} [\cln ((I -T_2^{* m_2} T_2^{ m_2})|_{\clh_u^1}) 
		\cap \cln ((I -T_2^{ m_2} T_2^{* m_2})|_{\clh_u^1})],\\
& \clh_{u\neg u}= \bigvee_{  m_2 \in \mathbb{Z}_{+}} \{ (I-T_2^{* m_2}T_2^{ m_2}) \clh_u^1 \cup 
		(I-T_2^{ m_2}T_2^{* m_2}){\clh_u^1} \}, \\
&\clh_{\neg u u}=
		\bigcap_{ m_2 \in \mathbb{Z}_{+}} [\cln ( (I-T_2^{*m_2} T_2^{ m_2})|_{\clh_{\neg u }^1})
		\cap \cln ((I_{\clh_{\neg u }^1}-T_2^{ m_2} T_2^{* m_2}) |_{\clh_{\neg u }^1})], \\
& \clh_{\neg u\neg u}= \bigvee_{ m_2 \in \mathbb{Z}_{+}} \{ (I-T_2^{* m_2}T_2^{ m_2}) \clh_{\neg u}^1 \cup 
		(I-T_2^{ m_2}T_2^{* m_2}){\clh_{\neg u}^1} \}, 
\end{align*}
and
\begin{align*}
		&\clh_u^1= \bigcap_{ m_1 \in \mathbb{Z}_{+}}[\cln(I- T_1^{* m_1} T_1^{ m_1}) 
		\cap \cln(I- T_1^{ m_1} T_1^{* m_1} )], \\
		& \clh_{\neg u}^1= \bigvee_{  m_1 \in \mathbb{Z}_{+}} \{ \clr(I-T_1^{* m_1}T_1^{ m_1})
		\cup \clr(I-T_1^{ m_1} T_1^{* m_1})
		\}.
\end{align*}
\end{thm}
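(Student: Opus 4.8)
The plan is to build the fourfold decomposition by iterating the canonical decomposition (Theorem \ref{Wold-type-contraction}) for a single contraction twice, using Lemma \ref{Reduce-subspace-prop1} to guarantee that each step respects the other operator. First I would apply Theorem \ref{Wold-type-contraction} to the contraction $T_1$ to obtain $\clh = \clh_u^1 \oplus \clh_{\neg u}^1$ with $T_1|_{\clh_u^1}$ unitary and $T_1|_{\clh_{\neg u}^1}$ c.n.u., with the stated formulas for $\clh_u^1$ and $\clh_{\neg u}^1$. Since $(T_1,T_2)$ is doubly twisted, Lemma \ref{Reduce-subspace-prop1} shows that both $\clh_u^1$ and $\clh_{\neg u}^1$ reduce $T_2$. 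Hence $T_2|_{\clh_u^1}$ and $T_2|_{\clh_{\neg u}^1}$ are again contractions, and I can apply Theorem \ref{Wold-type-contraction} to each of them separately.

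Next I would split $\clh_u^1 = \clh_{uu} \oplus \clh_{u\neg u}$ via the canonical decomposition of $T_2|_{\clh_u^1}$, and $\clh_{\neg u}^1 = \clh_{\neg u u} \oplus \clh_{\neg u \neg u}$ via the canonical decomposition of $T_2|_{\clh_{\neg u}^1}$. This yields the four summands. To get the explicit formulas I would use that $\clh_u^1$ reduces $T_2$, so $(T_2|_{\clh_u^1})^{m} = T_2^{m}|_{\clh_u^1}$ and $(T_2|_{\clh_u^1})^{*m} = T_2^{*m}|_{\clh_u^1}$, which lets me rewrite the defect-operator kernels and ranges computed inside $\clh_u^1$ as the restrictions $(I - T_2^{*m_2}T_2^{m_2})|_{\clh_u^1}$ and $(I - T_2^{m_2}T_2^{*m_2})|_{\clh_u^1}$ of the ambient defect operators; likewise inside $\clh_{\neg u}^1$. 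The fact that $\clh_u^1$ and $\clh_{\neg u}^1$ reduce these defect operators (part (3) of Lemma \ref{commuting operators-cor1}) is what makes the restriction manipulation legitimate. Then I must verify the claimed unitary/c.n.u.\ behaviour on each piece: on $\clh_{uu}$ both $T_1$ and $T_2$ are restrictions of unitary operators, hence unitary; on $\clh_{u\neg u}$, $T_1$ stays unitary (it is a restriction of $T_1|_{\clh_u^1}$, which is unitary, to a reducing subspace) while $T_2$ is c.n.u.\ by construction; symmetrically for the other two, using that a restriction of a unitary to a reducing subspace is unitary and that the c.n.u.\ property passes to reducing subspaces.

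Finally I would address uniqueness. Here the key point is that the unitary part in the single-variable canonical decomposition is intrinsic — it is exactly $\{h : \|T^n h\| = \|h\| = \|T^{*n}h\|,\ n \geq 1\}$ by Theorem \ref{Wold-type-contraction} — so $\clh_u^1$ is forced, and then within $\clh_u^1$ the splitting into $\clh_{uu}$ and $\clh_{u\neg u}$ is the (intrinsic) canonical decomposition of $T_2|_{\clh_u^1}$, hence also forced; similarly for $\clh_{\neg u}^1$. Any decomposition with the stated properties must have $T_1$ unitary precisely on $\clh_{uu} \oplus \clh_{u\neg u}$, so that sum equals $\clh_u^1$, and $T_2$ unitary on the relevant sub-pieces, pinning down all four subspaces.

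The main obstacle I anticipate is purely bookkeeping rather than conceptual: carefully justifying the identity between defect operators of the restricted operator $T_2|_{\clh_u^1}$ and the restrictions of the ambient defect operators of $T_2$ (and similarly on $\clh_{\neg u}^1$), which requires the commutation facts of Lemma \ref{commuting operators-cor1} so that $I - T_2^{*m_2}T_2^{m_2}$ and $I - T_2^{m_2}T_2^{*m_2}$ genuinely leave $\clh_u^1$ and $\clh_{\neg u}^1$ invariant. Once that is in place the four formulas in the statement follow by substituting the single-variable formulas of Theorem \ref{Wold-type-contraction} into each of the two nested applications, and no further computation is needed.
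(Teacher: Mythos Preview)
Your proposal is correct and follows essentially the same route as the paper: apply Theorem~\ref{Wold-type-contraction} to $T_1$, invoke Lemma~\ref{Reduce-subspace-prop1} to see that $\clh_u^1$ and $\clh_{\neg u}^1$ reduce $T_2$, then apply Theorem~\ref{Wold-type-contraction} to $T_2$ on each piece, using the reducing property to identify $(T_2|_{\clh_u^1})^{m}$ and its adjoint with the restrictions of $T_2^{m}$ and $T_2^{*m}$. Your treatment of uniqueness (via the intrinsic characterization of the unitary part) is slightly more explicit than the paper's, which simply records the formulas after the discussion and defers the uniqueness remark to the general $n$-variable theorem.
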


Return to our discussion on the decomposition for pairs of {\it{doubly 
twisted isometries}}. Suppose that $(T_1,T_2)$ is a pair of 
{\it{doubly twisted isometry}} with a twist $\clu_2=\{U\}$ on $\clh$. 
Then from the above Theorem \ref{Wold-type-pair-contractions}, we get
\[
\clh_u^1=  \bigcap_{ m_1=0}^{\infty} T_1^{ m_1} \clh \ \ \text{and} \ \
\clh_{\neg u}^1= \bigoplus_{ m_1=0}^{\infty}  T_1^{ m_1}\cln ( T_1^*).
\]
Again
\begin{align*}
	\clh_{uu} 
	&=\bigcap_{ m_2 \in \mathbb{Z}_{+}} \cln (I_{\clh_u^1}-T_2^{ m_2}
	|_{\clh_u^1} T_2^{* m_2}|_{\clh_u^1})
	=\bigcap_{ m_2 \in \mathbb{Z}_{+}} \big[\cln (T_2^{ *m_2}|_{\clh_u^1} ) \big] ^{\perp}\\
	&= \bigcap_{ m_2 \in \mathbb{Z}_{+}} T_2^{ m_2} \clh_u^1 
	= \bigcap_{ m_1, m_2 \in \mathbb{Z}_{+}} T_1^{ m_1} T_2^{ m_2} \clh,\\
	\mbox{and}\\
	\clh_{u\neg u} 
	&= \bigvee \Big\{  (I-T_2^{ m_2}T_2^{* m_2}){\clh_u^1}: m_2 \in \mathbb{Z}_{+} \Big\} 
	=\bigvee_{ m_2 \in \mathbb{Z}_{+}}  \bigg\{ (I-T_2^{ m_2}T_2^{* m_2})
	\big[\bigcap_{ m_1=0}^{\infty} T_1^{ m_1} \clh \big]\bigg\}.
\end{align*}
Since $T_2$ is an isometry and $\clh_u^1$ reduces $T_2$, 
$(I-T_2^{ m_2}T_2^{* m_2})|_{\clh_u^1}$ is a projection
for any fixed $m_2 \in \mathbb{Z}_{+}$. As $(T_1, T_2)$
is a pair of {\it{doubly twisted isometry}} and using 
Lemma \ref{commuting operators-cor1}, we get
\[
(I-T_2^{ m_2}T_2^{* m_2})T_1= T_1(I-T_2^{ m_2}T_2^{* m_2})
\iff P_{\cln(T_2^{*m_2})}T_1=T_1 P_{\cln(T_2^{*m_2})}.
\]
Again $\cln(T_2^{*})$ reduces the unitary $U$ as the pair $(T_2, U)$
is doubly commuting. 

Hence for any fixed $m_2 \geq 1$, we have
\begin{align*}
(I-T_2^{ m_2}T_2^{* m_2}) [\bigcap_{ m_1=0}^{\infty} T_1^{ m_1} \clh ]
&= (I-T_2^{ m_2}T_2^{* m_2}) \clh \bigcap  (I-T_2^{ m_2}T_2^{* m_2})T_1 
\clh \bigcap \ldots \qquad \\
&=  (\clh \ominus T_2^{ m_2}T_2^{* m_2} \clh) \bigcap T_1( \clh \ominus 
T_2^{ m_2}T_2^{* m_2} \clh) \bigcap \ldots\\
&=  \bigg(\bigoplus_{k=0}^{ m_2-1} T_2^k \cln (T_2^*)\bigg) \bigcap T_1
\bigg(\bigoplus_{k=0}^{ m_2-1} T_2^k \cln (T_2^*)\bigg) \bigcap \ldots \\
&=  \bigg(\bigoplus_{k=0}^{ m_2-1} T_2^k \cln (T_2^*)\bigg) \bigcap \bigg
(\bigoplus_{k=0}^{ m_2-1} T_2^k T_1 \cln( T_2^*)\bigg) \bigcap \ldots \\
&=\bigcap_{ m_1=0}^{\infty} \bigg(\bigoplus_{k=0}^{ m_2-1} T_2^k T_1^{ m_1}\cln( T_2^*)\bigg) \\
&=\bigoplus_{k=0}^{ m_2-1} T_2^k \bigg(\bigcap_{ m_1=0}^{\infty} T_1^{ m_1}\cln( T_2^*)\bigg). 
\end{align*}
Hence
\begin{align*}
\clh_{u \neg u}&=\bigoplus_{ m_2=0}^{\infty} T_2^{ m_2} \bigg(\bigcap_{ m_1=0}
^{\infty} T_1^{ m_1} \cln (T_2^*)\bigg). 
\end{align*}
Again
\begin{align*}
\clh_{\neg u u}
&= \bigcap_{ m_2 \in \mathbb{Z}_{+}} \cln ((I - T_2^{ m_2}T_2^{* m_2})|_{\clh_{\neg u }^1}) \\
&= \bigcap_{ m_2 \in \mathbb{Z}_{+}} T_2^{ m_2} {\clh_{\neg u }^1} \\
&= \bigcap_{m_2 \in \mathbb{Z}_{+}} \bigg( \bigoplus_{ m_1=0}^{\infty}  
T_1^{ m_1} T_2^{ m_2}\cln (T_1^*)\bigg) \\
& =\bigoplus_{ m_1=0}^{\infty}  T_1^{ m_1} \bigg(\bigcap_{ m_2 \in \mathbb{Z}_{+}} 
T_2^{ m_2} \cln( T_1^*) \bigg).
\end{align*}

Finally by Lemma \ref{commuting operators-cor1}, we have
\begin{align*}
\clh_{\neg u\neg u } &= \bigvee \{(I-T_2^{ m_2}T_2^{* m_2}){\clh_{\neg u}^1}:  m_2 \in \mathbb{Z}_{+} \} \\
&= \bigvee_{  m_2 \in \mathbb{Z}_{+}} \bigg\{  (I-T_2^{ m_2}T_2^{* m_2})
\Big(\bigoplus_{ m_1=0}^{\infty}  T_1^{ m_1}\cln( T_1^*) \Big)\bigg\} \\
&=  \bigvee_{  m_2 \in \mathbb{Z}_{+}} \bigg\{ 
\bigoplus_{ m_1=0}^{\infty}  T_1^{ m_1}(I-T_2^{ m_2}T_2^{* m_2}) \big[ \cln (T_1^*) \big] \bigg\}.
\end{align*}
Since $(T_1, T_2)$ is a pair of {\it{doubly twisted isometry}}, we get 
$\cln (T_1^*)$ is $T_2$-reducing subspace. Hence $T_2|_{\cln (T_1^*)}$ 
is an isometry. Now
\begin{align*}
	(I-T_2T_2^*)[\cln (T_1^*)]
	&=\cln (T_1^*) \ominus T_2 \cln( T_1^*) \\
	&=\clr(I - T_1 T_1^*) \ominus \clr [T_2(I - T_1 T_1^*)] \\ 
	&= \clr (I - T_1 T_1^*) \ominus \clr [T_2(I - T_1 T_1^*)T_2^*] \\ 
	&= \clr[(I - T_1 T_1^*) - T_2(I - T_1 T_1^*)T_2^*] \\
	&= \clr[(I - T_1 T_1^*) (I - T_2 T_2^*)] \\
	&= \clr(I - T_1 T_1^*) \cap \clr(I - T_2 T_2^*)   \\ 
	&= \cln (T_1^*) \cap \cln (T_2^*).
\end{align*}
Thus for each fixed $ m_2 \geq 1$, we can write
\begin{align*}
(I-T_2^{ m_2}T_2^{* m_2})[\cln (T_1^*)]
&= \cln ( T_1^*) \ominus T_2^{ m_2} \cln ( T_1^*) \\
&=[\cln  (T_1^*) \ominus T_2 \cln ( T_1^*))] \oplus \cdots \oplus 
T_2^{ m_2-1} [\cln ( T_1^*) \ominus T_2 \cln ( T_1^*)]\\
&= (\cln ( T_1^*) \cap \cln ( T_2^*)) \oplus \cdots \oplus 
T_2^{ m_2-1}( \cln ( T_1^*) \cap \cln ( T_2^*)) \\
&= \bigoplus_{k_2=0}^{ m_2-1}  T_2^{k_2}( \cln ( T_1^*) \cap \cln ( T_2^*)).
\end{align*}
Therefore,
\begin{align*}
\clh_{\neg u\neg u} &=  \bigvee_{  m_2 \geq 1 }  \bigg\{ 
\bigoplus_{ m_1=0}^{\infty}   \Big( \bigoplus_{k_2=0}^{ m_2-1}  
T_1^{ m_1}T_2^{k_2} \big( \cln ( T_1^*) \cap \cln ( T_2^*) \big) \Big) \bigg\}\\
&= \bigoplus_{ m_1=0}^{\infty}  \Big( \bigoplus_{ m_2=0}^{\infty}  
T_1^{ m_1}  T_2^{ m_2}  \big( \cln ( T_1^*) \cap \cln ( T_2^*) \big)  \Big) \\
&= \bigoplus_{ m_1, m_2=0}^{\infty}  T_1^{ m_1}  T_2^{ m_2} \big( \cln ( T_1^*) 
\cap \cln ( T_2^*) \big). 
\end{align*}

We know that the c.n.u. part of an isometry coincides with the
shift part of Wold-von Neumann decomposition. Hence we can obtain the
following decomposition for pairs of {\it{doubly twisted isometries}}:

\begin{thm}\label{wold-type-pairs-isomteries}
Let $\left( T_1, T_2 \right)$ be a pair of {\it{doubly twisted isometry}} 
on a Hilbert space $\mathcal{H}$. Then there is a unique decomposition 
\[
\mathcal{H}=\mathcal{H}_{uu} \oplus \mathcal{H}_{us} \oplus \mathcal{H}_{su} \oplus \mathcal{H}_{ss},
\]
where $\mathcal{H}_{uu},  \mathcal{H}_{us}, \mathcal{H}_{su},$ and 
$\mathcal{H}_{ss}$ are the subspaces reducing $T_1,  T_2$ such that
\begin{itemize}
\item	$T_1|_{\mathcal{H}_{uu}}, T_2|_{\mathcal{H}_{uu}}$ are unitary operators,
		
\item   $T_1|_{\mathcal{H}_{us}}$ is unitary, $T_2|_{\mathcal{H}_{us}}$ is unilateral shift,
		
\item  $T_1|_{\mathcal{H}_{su}}$ is unilateral shift, $T_2|_{\mathcal{H}_{su}}$ is unitary, 
		
\item $T_1|_{\mathcal{H}_{ss}}$, $T_2|_{\mathcal{H}_{ss}}$ are unilateral shifts. 
\end{itemize} 	
Also
\begin{align*}
		&\clh_{uu} = \bigcap_{ m_1, m_2 \in \mathbb{Z}_+} T_1^{ m_1} T_2^{ m_2} \clh,  
		&\clh_{us}= \bigoplus_{ m_2=0}^{\infty} T_2^{ m_2}  
		\Big (\bigcap_{ m_1=0}^{\infty} T_1^{ m_1}   \cln ( T_2^*)   \Big),\\
		& \clh_{su}=\bigoplus_{ m_1=0}^{\infty}  T_1^{ m_1} 
		\Big( \bigcap_{ m_2=0}^{\infty} T_2^{ m_2}  \cln ( T_1^*)  \Big), 
		&\clh_{ss}= \bigoplus_{ m_1, m_2=0}^{\infty}  T_1^{ m_1}  T_2^{ m_2}  
		\Big( \cln ( T_1^*) \cap \cln ( T_2^*) \Big).
\end{align*}
\end{thm}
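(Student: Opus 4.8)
The plan is to deduce the statement from Theorem~\ref{Wold-type-pair-contractions} by specializing to isometries and then upgrading each ``c.n.u.'' factor to a unilateral shift. Since every isometry is a contraction, a pair of doubly twisted isometries $(T_1,T_2)$ is in particular a pair of doubly twisted contractions, so Theorem~\ref{Wold-type-pair-contractions} already supplies the unique orthogonal decomposition $\clh=\clh_{uu}\oplus\clh_{u\neg u}\oplus\clh_{\neg u u}\oplus\clh_{\neg u\neg u}$ into joint $T_1,T_2$-reducing subspaces with the stated unitary/c.n.u.\ pattern. On each of these reducing subspaces the restriction of an isometry is again an isometry, and a completely non-unitary isometry is a unilateral shift (the Wold--von Neumann decomposition, recorded in the last Remark of Section~2). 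Applying this to $T_1$ and to $T_2$ on each of the four blocks converts every ``c.n.u.'' statement into ``unilateral shift''; relabelling $\clh_{us}:=\clh_{u\neg u}$, $\clh_{su}:=\clh_{\neg u u}$, $\clh_{ss}:=\clh_{\neg u\neg u}$ then yields exactly the four cases listed, and uniqueness is inherited from Theorem~\ref{Wold-type-pair-contractions} since ``shift'' and ``c.n.u.'' coincide for isometries.

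It remains to put the four subspaces into closed form, and for this I would start from the isometric version of the single-variable Wold decomposition, $\clh_u^1=\bigcap_{m_1\geq 0}T_1^{m_1}\clh$ and $\clh_{\neg u}^1=\bigoplus_{m_1\geq 0}T_1^{m_1}\cln(T_1^*)$, and substitute into the formulas of Theorem~\ref{Wold-type-pair-contractions}. Using that $T_2$ restricts to an isometry on the reducing subspaces $\clh_u^1$ and $\clh_{\neg u}^1$, one has $\cln\big((I-T_2^{m_2}T_2^{*m_2})|_{\clh_u^1}\big)=T_2^{m_2}\clh_u^1$, whence $\clh_{uu}=\bigcap_{m_2}T_2^{m_2}\clh_u^1=\bigcap_{m_1,m_2}T_1^{m_1}T_2^{m_2}\clh$, and similarly $\clh_{\neg u u}=\bigcap_{m_2}T_2^{m_2}\clh_{\neg u}^1=\bigoplus_{m_1}T_1^{m_1}\big(\bigcap_{m_2}T_2^{m_2}\cln(T_1^*)\big)$. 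For $\clh_{u\neg u}$ and $\clh_{\neg u\neg u}$ I would use the telescoping identity $\clr(I-T_2^{m_2}T_2^{*m_2})=\bigoplus_{k=0}^{m_2-1}T_2^{k}\cln(T_2^*)$ together with Lemma~\ref{commuting operators-cor1} (which gives $T_1 P_{\cln(T_2^{*m_2})}=P_{\cln(T_2^{*m_2})}T_1$) and the fact that $\cln(T_2^*)$ reduces the twist $U$; these combine to give $(I-T_2^{m_2}T_2^{*m_2})\big[\bigcap_{m_1}T_1^{m_1}\clh\big]=\bigoplus_{k=0}^{m_2-1}T_2^{k}\big(\bigcap_{m_1}T_1^{m_1}\cln(T_2^*)\big)$, so that $\clh_{u\neg u}=\bigoplus_{m_2\geq 0}T_2^{m_2}\big(\bigcap_{m_1\geq 0}T_1^{m_1}\cln(T_2^*)\big)$. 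For the last block I would first observe, using the twisted relations and Lemma~\ref{commuting operators-cor1}, that $\cln(T_1^*)=\clr(I-T_1T_1^*)$ reduces $T_2$ and that $(I-T_2T_2^*)\cln(T_1^*)=\clr\big[(I-T_1T_1^*)(I-T_2T_2^*)\big]=\cln(T_1^*)\cap\cln(T_2^*)$; iterating gives $(I-T_2^{m_2}T_2^{*m_2})\cln(T_1^*)=\bigoplus_{k=0}^{m_2-1}T_2^{k}\big(\cln(T_1^*)\cap\cln(T_2^*)\big)$, and combining with the expression for $\clh_{\neg u}^1$ produces $\clh_{\neg u\neg u}=\bigoplus_{m_1,m_2\geq 0}T_1^{m_1}T_2^{m_2}\big(\cln(T_1^*)\cap\cln(T_2^*)\big)$.

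The step I expect to demand the most care is the interchange of the infinite intersection over $m_1$ with the finite orthogonal direct sum over $k$ that appears in the formulas for $\clh_{u\neg u}$ and $\clh_{\neg u\neg u}$: one must check that $T_1$ preserves the orthogonal block structure $\bigoplus_{k=0}^{m_2-1}T_2^{k}\cln(T_2^*)$ (respectively $\bigoplus_{k=0}^{m_2-1}T_2^{k}(\cln(T_1^*)\cap\cln(T_2^*))$), so that a vector lying in $\bigcap_{m_1}\big(\bigoplus_{k=0}^{m_2-1}T_2^{k}T_1^{m_1}\cln(T_2^*)\big)$ has each of its orthogonal components separately in $\bigcap_{m_1}T_1^{m_1}\cln(T_2^*)$. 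This invariance is exactly what the commutation relation $T_1 P_{\cln(T_2^{*m_2})}=P_{\cln(T_2^{*m_2})}T_1$ from Lemma~\ref{commuting operators-cor1} delivers (together with $\cln(T_1^*)$ being $T_2$-reducing in the second case and $\cln(T_2^*)$ being $U$-reducing when one slides $T_1$ past the $T_2$-powers). The remaining ingredients --- the Wold--von Neumann theorem, the already-proved Theorem~\ref{Wold-type-pair-contractions}, and Lemma~\ref{commuting operators-cor1} --- reduce everything else to bookkeeping.
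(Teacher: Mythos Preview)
Your proposal is correct and follows essentially the same route as the paper: both start from Theorem~\ref{Wold-type-pair-contractions}, upgrade ``c.n.u.'' to ``shift'' via the Wold--von Neumann theorem, substitute the isometric single-variable formulas for $\clh_u^1$ and $\clh_{\neg u}^1$, and then compute the four blocks using Lemma~\ref{commuting operators-cor1}, the telescoping identity for $\clr(I-T_2^{m_2}T_2^{*m_2})$, the $U$-reducibility of $\cln(T_2^*)$, and the $T_2$-reducibility of $\cln(T_1^*)$. The interchange of the intersection over $m_1$ with the finite orthogonal sum that you flag as the delicate point is exactly the step the paper carries out in detail.
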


\begin{rem}
The above result recovers the Wold-type decomposition and 
its orthogonal spaces for pairs of doubly commuting isometries 
(In particular, if we take twist $\clu_2=\{I\}$, an 
identity operator) on Hilbert spaces which were firstly studied by
S\l oci\'{n}ski \cite{SLOCINSKI-WOLD} and later for pairs of commuting 
isometries by Popovici \cite{POPOVICI-WOLD TYPE}. It is also a noteworthy 
to mention that orthogonal decomposition spaces of Wold-type decomposition 
for pairs of doubly twisted isometries and for pairs of doubly commuting 
isometries on Hilbert spaces are the same.
\end{rem}

%%%%%%%%%%%%%%%%%%%%%%%%%%%%%%%%%%%%%%%%%%%%%%%%
%%%%%%%%%%%%%%%%%%%%%%%%%%%%%%%%%%%%%%%%%%%%%%%
%%%%%%%%%%%%%%%%%%%%%%%%%%%%%%%%%%%%%%%%%%%%%%%%%%%
\section{Decomposition for $\clu_n$-twisted contractions}

In this section, we will find the explicit Wold-type decomposition for
$\clu_n$-twisted contractions on Hilbert spaces. As a by-product, we 
derive a simple proof for Wold-type decomposition for $\clu_n$-twisted 
isometries. 
\vspace{0.1cm}

Before proceeding further, we shall introduce certain notations 
for the remainder of the paper. Given an integer $p$ for $1 \leq p \leq n$, 
we denote the set $\{1,\ldots, p\}$ by $I_p$ and $A_p \subseteq 
I_p$; that means each subset (including empty set) of $I_p$  
is denoted by $A_p$. In addition, if $A_p \subseteq I_p$ and 
$q \notin I_p$, then $\tilde{A}_p$ is denoted by same $A_p$ but we 
will treat $\tilde{A}_p$ as a subset of $I_{p} \cup \{q\}$. Using 
the aforementioned notations, we generalize decomposition for
$\mathcal{U}_n$-twisted contraction as follows:

\begin{rem}\label{Generalized-canonical-decomposition}
Let $T_q$ be a contraction on a Hilbert space $\clh_{A_p}$, where 
$A_p \subseteq I_p$ and $q \notin I_p$. Then $\clh_{A_p}= \clh_{\tilde{A}_p} 
\oplus \clh_{{\tilde{A}_p} \cup \{q\}}$ is the orthogonal decomposition 
for $T_q$ such that $T_q|_{\clh_{\tilde{A}_p}}$ is unitary and 
$T_q|_{\clh_{{\tilde{A}_p} \cup \{q\}}}$ is c.n.u. In particular,
if $T$ is a single contraction on $\clh$, then from the 
above notation the canonical decomposition for $T$ is $\clh=\clh_{\emptyset} 
\oplus \clh_{\{1\}}$, where $\clh_{\emptyset} =\clh_{u}$ and $\clh_{\{1\}} 
=\clh_{\neg u}$ (see Theorem \ref{Wold-type-contraction}).
\end{rem}

We are now in a position to state our main result in this section.

\begin{thm}	\label{thm-Multi-contraction}
Let $n \geq 2$, and let $(T_1, \ldots, T_n )$ be a $\mathcal{U}_n$-twisted 
contraction on a Hilbert space $\mathcal{H}$. Then there exists $2^n$-joint 
$(T_1, \ldots, T_n)$-reducing subspaces $\{ \mathcal{H}_{A_n}: A_n 
\subseteq I_n$ (counting the trivial subspace $\{0\}$) such that 
\begin{align}
\mathcal{H}=\bigoplus_{A_n \subseteq I_n} \mathcal{H}_{A_n}
\end{align}
where
\begin{align}\label{subspace-equn 2}
\mathcal{H}_{A_n}=
\begin{cases}
& \bigcap_{ m_n \in \mathbb{Z}_+} \big[\mathcal{N} ((I -T_n^{* m_n}
T_n^{ m_n})|_{\mathcal{H}_{A_{n-1}}}) 
\cap \mathcal{N} ((I -T_n^{ m_n} T_n^{* m_n})|_{\mathcal{H}_{A_{n-1}}})\big] 
\quad 	\text{for } \ \ n \notin A_n, \\
& \bigvee_{  m_n \in \mathbb{Z}_{+}} \big\{ (I-T_n^{* m_n}T_n^{ m_n}) 
\mathcal{H}_{A_{n-1}} \cup (I-T_n^{ m_n}T_n^{* m_n}){\mathcal{H}_{A_{n-1}}} \big\}
\quad \qquad \quad \	\text{for} \ \ n \in A_n. 
\end{cases}
\end{align}
For each $A_n$ and $\mathcal{H}_{A_n} \neq \{0\}$, $T_i|_{\mathcal{H}_{A_n} }$ 
is unitary if $i \notin A_n$ and $T_i|_{\mathcal{H}_{A_n} }$ is completely 
non-unitary if $i \in A_n$ for $i= 1, \ldots, n$. Moreover, the above 
decomposition is unique.
\end{thm}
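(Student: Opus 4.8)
The plan is to argue by induction on $n$, peeling off one contraction at a time and using the canonical decomposition (Theorem \ref{Wold-type-contraction}) together with the twist relations to show that each step respects the previously constructed reducing subspaces. The base case $n=1$ is precisely Theorem \ref{Wold-type-contraction} in the notational guise of Remark \ref{Generalized-canonical-decomposition}, namely $\clh = \clh_{\emptyset} \oplus \clh_{\{1\}}$. For the inductive step, suppose the decomposition $\clh = \bigoplus_{A_{n-1} \subseteq I_{n-1}} \clh_{A_{n-1}}$ has been constructed for the $(n-1)$-tuple $(T_1,\ldots,T_{n-1})$, with each $\clh_{A_{n-1}}$ jointly reducing $T_1,\ldots,T_{n-1}$ and $T_i|_{\clh_{A_{n-1}}}$ unitary for $i \notin A_{n-1}$, c.n.u. for $i \in A_{n-1}$.

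The first key step is to show that each $\clh_{A_{n-1}}$ also reduces $T_n$. This is where the twist relations do the work: since $(T_1,\ldots,T_n)$ is a $\clu_n$-twisted contraction, Lemma \ref{commuting operators-cor1} gives that $T_n$ and $T_n^*$ commute with $T_i^{m}T_i^{*m}$ and $T_i^{*m}T_i^{m}$ for all $i \neq n$ and all $m \in \mathbb{Z}_+$; hence $T_n$ commutes with $(I - T_i^{*m}T_i^m)$ and $(I - T_i^{m}T_i^{*m})$. Since each $\clh_{A_{n-1}}$ is built (via the inductive description in \eqref{subspace-equn 2}, applied through level $n-1$) out of kernels, ranges, closed spans, intersections and orthogonal sums of such operators applied to $\clh$, the subspace $\clh_{A_{n-1}}$ is invariant under $T_n$ and $T_n^*$, i.e. reduces $T_n$. (Concretely, this follows exactly as in Lemma \ref{Reduce-subspace-prop1} and in the $n=2$ discussion preceding Theorem \ref{Wold-type-pair-contractions}, iterated.)

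The second step is then routine: for each fixed $A_{n-1}$, $T_n|_{\clh_{A_{n-1}}}$ is a contraction on the Hilbert space $\clh_{A_{n-1}}$, so by Theorem \ref{Wold-type-contraction} (equivalently Remark \ref{Generalized-canonical-decomposition} with $p = n-1$, $q = n$) it splits as $\clh_{A_{n-1}} = \clh_{\tilde A_{n-1}} \oplus \clh_{\tilde A_{n-1} \cup \{n\}}$ with $T_n$ unitary on the first summand and c.n.u. on the second, and with the two summands given by the formulas in \eqref{subspace-equn 2}; since $\clh_{A_{n-1}}$ reduces $T_n$, powers restrict as $(T_n|_{\clh_{A_{n-1}}})^m = T_n^m|_{\clh_{A_{n-1}}}$ and similarly for adjoints, which is what lets the restricted defect operators be rewritten as $(I - T_n^{*m}T_n^m)|_{\clh_{A_{n-1}}}$, etc. Relabeling $A_n := A_{n-1}$ when $n \notin A_n$ and $A_n := A_{n-1} \cup \{n\}$ when $n \in A_n$ yields the claimed $2^n$ subspaces, their reducing property under all of $T_1,\ldots,T_n$ (each $\clh_{A_n}$ reduces $T_1,\ldots,T_{n-1}$ because it is a reducing subspace of $\clh_{A_{n-1}}$ and those reduce $T_1,\ldots,T_{n-1}$, and it reduces $T_n$ by construction), and the unitary/c.n.u. dichotomy of $T_i|_{\clh_{A_n}}$ for $i = 1,\ldots,n$.

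\textbf{Uniqueness} follows by the same inductive scheme: at each stage the splitting of a contraction into its unitary and c.n.u. parts is unique (Theorem \ref{Wold-type-contraction}); if $\clh = \bigoplus_{A_n} \clh'_{A_n}$ is another decomposition with the stated properties, then grouping the $\clh'_{A_n}$ according to whether $n \in A_n$ produces a reduction of $T_n$ into a unitary part and a c.n.u. part, which must coincide with $\bigoplus_{n \notin A_n}\clh_{A_n}$ and $\bigoplus_{n \in A_n}\clh_{A_n}$ respectively; restricting to each summand and applying the inductive hypothesis to $(T_1,\ldots,T_{n-1})$ finishes the argument. The main obstacle I anticipate is the bookkeeping in the first step — verifying cleanly that every subspace $\clh_{A_{n-1}}$ arising from the nested formulas \eqref{subspace-equn 2} is genuinely a reducing subspace for $T_n$, since one must track that intersections and closed spans of $T_n$-reducing subspaces are again $T_n$-reducing and that the commutation from Lemma \ref{commuting operators-cor1} propagates through all $n-1$ levels of the construction; once that is in place, everything else is a direct appeal to the single-contraction canonical decomposition.
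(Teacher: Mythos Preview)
Your proposal is correct and follows essentially the same route as the paper: induction on $n$, invoking Lemma \ref{commuting operators-cor1} to show that each $\clh_{A_{n-1}}$ reduces $T_n$, then applying the single-contraction canonical decomposition (Theorem \ref{Wold-type-contraction}) to $T_n|_{\clh_{A_{n-1}}}$, with uniqueness inherited from the uniqueness of that decomposition. The only point to tighten is the sentence ``each $\clh_{A_n}$ reduces $T_1,\ldots,T_{n-1}$ because it is a reducing subspace of $\clh_{A_{n-1}}$'' --- being a subspace of a $T_i$-reducing space is not enough by itself; the actual reason (which you already articulated for $T_n$ one paragraph earlier) is that $\clh_{A_n}$ is built from $\clh_{A_{n-1}}$ via the operators $(I-T_n^{*m}T_n^m)$ and $(I-T_n^mT_n^{*m})$, which commute with $T_i$ and $T_i^*$ for $i<n$ by Lemma \ref{commuting operators-cor1}.
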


\begin{proof}
We will prove this by mathematical induction. 
Suppose that $(T_1,T_2)$ is a pair of $\clu_2$-twisted contraction 
on $\clh$. Then by Theorem \ref{Wold-type-pair-contractions}, there 
exists four $(T_1, T_2)$-reducing subspaces $\clh_{uu}, \clh_{u \neg u}, 
\clh_{\neg u u}$, and $\clh_{ \neg u \neg u}$ of $\clh$ such that
\begin{align*}
\clh  &=\mathcal{H}_{uu} \oplus \mathcal{H}_{\neg u u} \oplus 
\mathcal{H}_{u \neg u } \oplus \mathcal{H}_{\neg u \neg u} \\
& =\mathcal{H}_{\emptyset} \oplus \mathcal{H}_{\{1\}} \oplus 
\mathcal{H}_{\{2\}} \oplus \mathcal{H}_{\{1,2\}} \\ 
&=\bigoplus_{A_2 \subseteq I_2}\clh_{A_2},
\end{align*}
where $\clh_{A_2}$ has the explicit form (see Theorem 
\ref{Wold-type-pair-contractions}). Moreover, $T_i|_{\clh_{A_2}}$ 
is unitary if $i \notin A_2$ and c.n.u. if $i \in A_2$ for $i=1, 2$. 
So the statement is true for $n=2$.

Assume that the statement is true for any $k$-tuple $(T_1, \ldots, T_k)$, 
$k <n$ of $\clu_k$-twisted contraction on $\clh$. Then 	
\begin{align*}
\clh &=\bigoplus_{\substack{{A_{k} \subseteq  I_k}}} \clh_{A_k},
\end{align*}
where 
\begin{align} \label{Main-Unitary-and-cnu-space}
\mathcal{H}_{A_k}=
\begin{cases}
& \bigcap_{ m_k \in \mathbb{Z}_+} \big[\mathcal{N} ((I -T_k^{* m_k}
T_k^{ m_k})|_{\mathcal{H}_{A_{k-1}}}) 
\cap \mathcal{N} ((I -T_k^{ m_k} T_k^{* m_k})|_{\mathcal{H}_{A_{k-1}}})\big] 
\quad 	\text{for } \ \ k \notin A_k \\
& \bigvee_{  m_k \in \mathbb{Z}_{+}} \big\{ (I-T_k^{* m_k}T_k^{ m_k}) 
\mathcal{H}_{A_{k-1}} \cup (I-T_k^{ m_k}T_k^{* m_k}){\mathcal{H}_{A_{k-1}}} \big\}
\quad \qquad \quad \	\text{for} \ \ k \in A_k. 
\end{cases}
\end{align}
Also $T_i|_{\clh_{ A_{k}}}$ is unitary if $i \notin A_{k}$ and is c.n.u. 
if $i \in A_{k}$ for $i= 1, \ldots, k$. 
It is to be noted that the spaces $\mathcal{H}_{A_{k-1}}$ reduce $T_k$
from Lemma \ref{commuting operators-cor1}, and the decomposition for $\clu_k$-twisted 
contraction yields $2^{k}$ number of orthogonal $T_i$-reducing subspaces 
$\clh_{A_k}$ for $1 \leq i \leq k$. We shall now prove this statement for the 
decomposition of $(k+1)$-tuple $(T_1, \ldots, T_{k+1})$ of $\clu_{k+1}$-twisted 
contraction on $\clh$. Indeed, we show that 
\begin{align*}
\clh &=\bigoplus_{\substack{{A_{k+1} \subseteq  I_{k+1}}}}\clh_{A_{k+1}}.
\end{align*}
As the tuple $(T_1, \ldots, T_n)$ is $\clu_n$-twisted contraction, using
Lemma \ref{commuting operators-cor1} and the equation \eqref{Main-Unitary-and-cnu-space}, 
we have $T_{j} \clh_{A_k} \subseteq \clh_{A_{k}}$ and $T_{j}^* \clh_{A_k}
\subseteq \clh_{A_{k}}$ for all $k <j \leq n$, that is, $\clh_{A_{k}}$ reduce $T_j$ 
for $k <j \leq n$. Therefore, the canonical decomposition for the contraction 
$T_{k+1}|_{\clh_{A_k}}$ yields ${\clh_{A_k}}= {\clh_{\tilde{A}_k}} \oplus 
{\clh_{\tilde{A}_k \cup \{k+1\}}}$ (see Remark \ref{Generalized-canonical-decomposition}), 
where $\tilde{A}_k=A_k$ but as a subset of $I_k \cup \{k+1\}=I_{k+1}$. Moreover, 
$T_{k+1}|_{\clh_{\tilde{A}_k} }$ is unitary and $T_{k+1}|_{\clh_{\tilde{A}_k \cup \{k+1\}}}$ 
is c.n.u., that is, $T_{k+1}|_{\clh_{{A}_{k+1}} }$ is unitary if $k+1 \notin A_{k+1}$
and is c.n.u. if $k+1 \in A_{k+1}$. Consequently,
\begin{align*}
\clh
		&=\bigoplus_{\substack{A_k \subseteq I_k }} {\clh_{A_k}} \\
		&= 	\bigoplus_{\substack{ \tilde{A}_k=A_k \subseteq I_{k+1}}} 
		[{\clh_{\tilde{A}_k}} \oplus {\clh_{\tilde{A}_k \cup \{k+1\}}}] \\
		&=\bigoplus_{\substack{{A_{k+1} \subseteq I_{k+1}}}} \clh_{A_{k+1}},
\end{align*}
where the subspaces $\clh_{A_{k+1}}$ reduce each $T_i$ for $1 \leq i \leq k+1$. 
Also for each ${A_{k+1}}$ and $\clh_{{A_{k+1}}} \neq \{0\}$, $T_i|_{\clh_{ A_{k+1}}}$ is unitary 
if $i \notin A_{k+1}$ and is c.n.u.	if $i \in A_{k+1}$ for $i= 1, \ldots, k+1$. 
Since $T_{k+1}|_{\clh_{A_k}}$ is a contraction, from Theorem 
\ref{Wold-type-contraction} we obtain 
\[
	\clh_{A_{k+1}}
	= \bigcap_{ m_{k+1} \in \mathbb{Z}_+} 
	\Big[\cln \big((I -T_{k+1}^{* m_{k+1}} T_{k+1}^{ m_{k+1}})|_{\clh_{A_{k}}}\big) 
	\cap \cln \big((I-T_{k+1}^{ m_{k+1}} T_{k+1}^{* m_{k+1}})|_{\clh_{A_{k}}}\big)\Big] 
	\quad \text{if} \quad k+1 \notin A_{k+1}
\]
and
\[
\clh_{A_{k+1}}=
\bigvee_{  m_{k+1} \in \mathbb{Z}_+} \Big\{ (I-T_{k+1}^{* m_{k+1}}T_{k+1}^{ m_{k+1}}) 
\clh_{A_{k}} \cup (I-T_{k+1}^{ m_{k+1}}T_{k+1}^{* m_{k+1}}){\clh_{A_{k}}} \Big\}
\quad \text{if} \quad k+1 \in A_{k+1}. 
\]							
The uniqueness part of this decomposition comes from the uniqueness 
of the canonical decomposition of a contraction. 

This finishes the proof.	
\end{proof}

We shall now derive decomposition for $\clu_n$-twisted isometries.
More specifically, if an n-tuple of isometries $(T_1, \dots, T_n)$ 
on $\clh$ is a $\clu_n$-twisted isometry, then we obtain an explicit 
description of the orthogonal decomposition of $\clh$.
Before going to the proof let us adopt the following notations: 
Let $\Lambda =\{i_1<i_2<, \ldots, <i_{l-1}< i_l\} \subseteq I_n $ for
$1 \leq l \leq n$, $I_n \backslash \Lambda= \{i_{l+1}<i_{l+2}<, \ldots, 
<i_{n-1}< i_n\}$. The cardinality 
of the set $\Lambda$ is denoted by $|\Lambda|$. We denote by $T_{\Lambda}$ the 
$|\Lambda|$- tuple of isometries $(T_{i_1},\dots, T_{i_l})$ and 
$\mathbb{Z}_{+}^{\Lambda} :=\{ {\bf {m}}=(m_{i_1},\dots,m_{i_l}): m_{i_j}  \in 
\mathbb{Z}_{+}, 1 \leq j \leq l \}$. Also $T_{i_1}^{m_{i_1}} \cdots T_{i_l}^{m_{i_l}}$ 
is denoted by $T_{\Lambda}^ {\bf m}$ for $ {\bf m} \in \mathbb{Z}_{+}^{\Lambda}$.

Consider $\clw_{i_j}:= \clr(I-T_{i_j}T_{i_j}^*)=\cln (T_{i_j}^*)$ 
for each $1 \leq j \leq l$ and 
\[
\clw_{\Lambda}:= \clr\Big(\prod_{{i_j} \in \Lambda} \big(I-T_{i_j}T_{i_j}^*\big)\Big),
\]
where $\Lambda$ is a non-empty subset of $I_n$. We also denote $\clw_{\emptyset} = \clh$. 
As the tuple $(T_1, \ldots, T_n)$
is $\clu_n$-twisted isometry, Lemma \ref{commuting operators-cor1} implies that
$\big\{ (I- T_{i_j} T_{i_j}^*)\big\}_{{j}=1}^{l}$ is a family of commuting 
orthogonal projections. Therefore,
\[
\clw_{\Lambda}= \clr \Big(\prod_{{i_j} \in \Lambda} (I-T_{i_j}T_{i_j}^*)\Big)
= \bigcap_{{i_j} \in \Lambda} \clr \big(I-T_{i_j}T_{i_j}^*\big)
= \bigcap_{{i_j} \in \Lambda} \clw_{i_j}
\]
for each subset $\Lambda$ of $I_n$.

The following result is similar to Theorem 3.6 in \cite{{RSS-TWISTED ISOMETRIES}},
and Theorem 3.1 in \cite{SARKAR-WOLD} (For $n$-tuple of doubly commuting isometries).
However, our approach is different and derived from our above result Theorem 
\ref{thm-Multi-contraction} and properties of isometries. We are now in
a position to state the result as follows:

\begin{thm}\label{thm-Multi-isometries}
Let $n \geq 2$, and let $T=(T_1,\dots, T_n)$ be a $\clu_n$-twisted isometry on $\clh$. 
Then there exists $2^n$ joint $T$-reducing subspaces $\{\clh_{\Lambda}: 
\Lambda \subseteq I_n  \}$ (counting the trivial subspace $\{0\}$) such that 
\begin{align*}
\clh= \bigoplus_{\substack{ \Lambda \subseteq I_n }} \clh_{\Lambda}
\end{align*}
and for each $ \Lambda \subseteq I_n $, we have 
\begin{align}\label{equn 4}
\clh_{\Lambda}= \bigoplus_{{\bf k} \in \mathbb{Z}_{+}^{ \Lambda}} T_{{ \Lambda}}^{\bf k} 
\bigg(   \bigcap_{{\bf m} \in \mathbb{Z}_{+}^{{I_n \backslash \Lambda}}}T^{\bf m}
_{ I_n \backslash \Lambda} \ \clw_{ \Lambda}  \bigg).
\end{align}	
And for  $\clh_{\Lambda} \neq \{0\}$, $T_i|_{\clh_{\Lambda}}$ is unitary 
if $i \in I_n \backslash \Lambda$ and $T_i|_{\clh_{\Lambda}}$ is shift 
if $i \in  \Lambda $ for all $i=1, \dots, n$. Furthermore, the above 
decomposition is unique.
\end{thm}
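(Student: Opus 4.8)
The plan is to derive the statement from Theorem~\ref{thm-Multi-contraction} together with the Wold--von Neumann decomposition, and then to establish the explicit formula \eqref{equn 4} by induction on $n$. First I would apply Theorem~\ref{thm-Multi-contraction}: since the $\clu_n$-twisted isometry $T=(T_1,\dots,T_n)$ is in particular a $\clu_n$-twisted contraction, it gives the unique orthogonal decomposition $\clh=\bigoplus_{\Lambda\subseteq I_n}\clh_\Lambda$ into joint $T$-reducing subspaces with $T_i|_{\clh_\Lambda}$ unitary for $i\notin\Lambda$ and completely non-unitary for $i\in\Lambda$. As each $T_i$ is an isometry and a completely non-unitary isometry is a unilateral shift (the Wold--von Neumann decomposition, recovered in the remarks following Theorem~\ref{Wold-type-contraction}), this already yields the structural assertion, and uniqueness is inherited. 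Hence only the closed-form description \eqref{equn 4} remains to be proved.

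For \eqref{equn 4} I would induct on $n$. The base case $n=2$ is Theorem~\ref{wold-type-pairs-isomteries}: with $\clw_\emptyset=\clh$, $\clw_{\{i\}}=\cln(T_i^*)$ and $\clw_{\{1,2\}}=\cln(T_1^*)\cap\cln(T_2^*)$, the four subspaces there are precisely the four instances of \eqref{equn 4}. For the inductive step, fix $\Lambda\subseteq I_n$ and set $\Lambda'=\Lambda\cap I_{n-1}$. By the recursion built into Theorem~\ref{thm-Multi-contraction}, $\clh_\Lambda$ is the unitary part of the isometry $T_n|_{\clh_{\Lambda'}}$ if $n\notin\Lambda$ and its shift part if $n\in\Lambda$; since $\clh_{\Lambda'}$ reduces $T_n$, the Wold form of these parts gives $\clh_\Lambda=\bigcap_{m\ge 0}T_n^{m}\,\clh_{\Lambda'}$ in the first case and $\clh_\Lambda=\bigoplus_{k\ge 0}T_n^{k}\bigl(\clh_{\Lambda'}\cap\cln(T_n^*)\bigr)$ in the second. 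Now I would substitute the inductive expression for $\clh_{\Lambda'}$ and simplify, using: (i) by Lemma~\ref{commuting operators-cor1} the projections $\{I-T_iT_i^*\}$ pairwise commute, so $\clw_\Lambda=\bigcap_{i\in\Lambda}\cln(T_i^*)$ and, when $n\in\Lambda$, $\clw_{\Lambda'}\cap\cln(T_n^*)=\clw_\Lambda$; (ii) the relation $T_kU_{ij}=U_{ij}T_k$ forces each twist unitary $U_{ij}$ to commute with every $T_k$ and $T_k^*$, hence to reduce every $\cln(T_k^*)$ and every $\clw_\Lambda$, so that the twist factors appearing when $T_n^{m}$ is commuted past $T_{\Lambda'}^{{\bf k}'}$ or $T_{I_{n-1}\setminus\Lambda'}^{{\bf m}'}$ act trivially on the subspaces in sight and may be discarded; (iii) for an isometry $V$ and a $V$-reducing subspace $\clm$, $(V^{j}\clx)\cap\clm=V^{j}(\clx\cap\clm)$; (iv) $P_{\cln(T_n^*)}=I-T_nT_n^*$ commutes with $T_i$ for $i\neq n$, hence with the projections onto the summands of $\clh_{\Lambda'}$, so that intersecting with $\cln(T_n^*)$ passes through the direct sum over ${\bf k}'$.

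In the case $n\in\Lambda$, pushing $\cap\,\cln(T_n^*)$ all the way inside turns the innermost space into $\clw_{\Lambda'}\cap\cln(T_n^*)=\clw_\Lambda$ while $I_{n-1}\setminus\Lambda'=I_n\setminus\Lambda$, and since $n$ is the largest index the product $T_n^{k}T_{\Lambda'}^{{\bf k}'}$ reassembles, after discarding twist factors, to $T_\Lambda^{{\bf k}}$ with ${\bf k}=({\bf k}',k)$; this is exactly \eqref{equn 4}. In the case $n\notin\Lambda$ one has $\Lambda=\Lambda'$ and $\clw_\Lambda=\clw_{\Lambda'}$, and the crux is to interchange $\bigcap_{m\ge 0}T_n^{m}(\cdot)$ with $\bigoplus_{{\bf k}'}T_{\Lambda'}^{{\bf k}'}(\cdot)$; this is legitimate because the ${\bf k}'$-components of an element of $\clh_{\Lambda'}$ are uniquely determined (the direct sum from the $(n-1)$-step is orthogonal) and each $T_{\Lambda'}^{{\bf k}'}$ is injective, so membership in $\bigcap_{m}$ can be tested componentwise. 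After the interchange the innermost space is $\bigcap_{m\ge 0}T_n^{m}\bigcap_{{\bf m}'}T_{I_{n-1}\setminus\Lambda'}^{{\bf m}'}\clw_{\Lambda'}$, and since $n$ is the largest index of $I_n\setminus\Lambda=(I_{n-1}\setminus\Lambda')\cup\{n\}$ these two intersections merge into $\bigcap_{{\bf m}\in\mathbb{Z}_{+}^{I_n\setminus\Lambda}}T_{I_n\setminus\Lambda}^{{\bf m}}\clw_\Lambda$, again giving \eqref{equn 4}; the uniqueness is inherited from Theorem~\ref{thm-Multi-contraction}. I expect the main obstacle to be the bookkeeping in (ii) and the componentwise interchange: one must verify that every subspace occurring is genuinely reduced by the twist unitaries and by the operators being moved past it, so that the twisted commutation relations collapse to honest commutation there; granting that, the remaining manipulations of intersections and direct sums are routine.
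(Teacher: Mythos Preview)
Your proposal is correct and takes essentially the same approach as the paper: both apply Theorem~\ref{thm-Multi-contraction}, observe that a c.n.u.\ isometry is a unilateral shift, and then unwind the recursive description of $\clh_\Lambda$ using the isometry simplifications and Lemma~\ref{commuting operators-cor1}. The only difference is organizational---you cast the derivation of \eqref{equn 4} as an induction on $n$ with base case Theorem~\ref{wold-type-pairs-isomteries}, whereas the paper unwinds the recursion from Theorem~\ref{thm-Multi-contraction} directly for each fixed $\Lambda$, peeling off the unitary indices (producing the intersection) and then the shift indices (producing the direct sum and $\clw_\Lambda$); the interchange of $\bigcap$ and $\bigoplus$ that you flag as the delicate point is performed in the paper with the same underlying justification.
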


\begin{proof}
Let $n \geq 2$ be a fixed integer. Suppose that $T=(T_1,\dots, T_n)$ is 
a $\clu_n$-twisted isometries on $\clh$. Thus, by Theorem 
\ref{thm-Multi-contraction} there exists $2^n$ joint $T$-reducing subspaces 
$\{\clh_{A_n}: A_{n} \subseteq I_n \} $ (including the trivial subspace $\{0\}$) 
such that 
\begin{align*}
\clh= \bigoplus_{ A_n \subseteq I_n }  \clh_{A_n}.
\end{align*}
Moreover, for every non-zero decomposition spaces $\clh_{A_n} $, 
$T_i|_{\clh_{A_n}}$ is unitary if $i \notin A_n$ and is shift if $i \in A_n$ 
for each $i=1, \ldots, n$. Now if $n \notin A_n $, then by equation 
\eqref{subspace-equn 2}, the subspace $\clh_{A_n}$ becomes 
\begin{align*}
\clh_{A_n}
&= \bigcap_{ m_{n} \in \mathbb{Z_{+}}} \Big[ \cln \big((I -T_{n}^{ m_{n}} 
T_{n}^{*m_n})|_{\clh_{A_{n-1}}}\big)\Big]  \\
&=  \bigcap_{ m_n \in \mathbb{Z}_{+}} T_n^{ m_n} \clh_{A_{n-1}}.
\end{align*} 
Now consider $A_{i_j} \subseteq I_{i_j} = \{i_1<i_2<, \ldots, <i_{j-1}< i_j\}$
for $j=1, \ldots, l$ and $1 \leq l \leq n$.
For a fixed $l$, let $\Lambda =\{i_1<i_2<, \ldots, <i_{l-1}< i_l\}$. 
Suppose ${i_j} \notin A_{i_j}$ for $j=n, n-1, \ldots, l+1$. Since the tuple
$T=(T_1,\dots, T_n)$ is $\clu_n$-twisted and repeating the above step, 
we have 
\begin{align}\label{multi-iso-equ}
\clh_{\Lambda} 
&=  \bigcap_{{ m_{i_{l+1}}, \ldots, m_{i_n}} \in \mathbb{Z}_{+}} 
		T_{i_n}^{ m_{i_n}} \cdots
		T_{i_{l+1}}^{ m_{i_{l+1}}}\clh_{A_{i_{l}} }\\ \nonumber
&=\bigcap_{ {\bf m} \in \mathbb{Z}_{+}^{I_n \backslash \Lambda}}
		T_{I_n \backslash \Lambda}^{\bf m} \clh_{A_{i_l }}.  
\end{align}
Now for the remaining set $I_n \backslash \Lambda$ if  $i_j \in A_{i_j}$ 
for $j=1 , \dots, l$, then from equation \eqref{subspace-equn 2}, 
the subspace $ \clh_{{A_{i_{l}}} }$ can be expressed as	
\begin{align*}
\clh_{{A_{i_{l}}} } 
&= \bigvee_{  m_{i_l} \in \mathbb{Z}_{+}} \Big\{ \big(I-T_{i_l}
^{ m_{i_l}}T_{i_l}^{* m_{i_l}}\big){\clh_{A_{i_{l-1}} }} \Big\} \\
&= \bigvee_{ m_{i_{1}}, \ldots, m_{i_l} \in \mathbb{N}} 
\Big\{ \big(I-T_{i_l}^{ m_{i_l}}T_{i_l}^{* m_{i_l}}\big) \dots 
\big(I-T_{i_{2}}^{ m_{i_{2}}}T_{i_{2}}^{* m_{i_{2}}}\big)
\big(I-T_{i_{1}}^{ m_{i_{1}}}T_{i_{1}}^{* m_{i_{1}}}\big) {\clh} \Big\}. 
\end{align*}
Again applying Lemma \ref{commuting operators-cor1}, 
for $j=1, \ldots, l$ and $m_{i_j} \geq 1$, we can write
\begin{align*}
		&\big(I-T_{i_l}^{ m_{i_l}}T_{i_l}^{* m_{i_l}}\big) \cdots
		\big(I-T_{i_{2}}^{ m_{i_{2}}}T_{i_{2}}^{* m_{i_{2}}}\big)
		\big(I-T_{i_{1}}^{ m_{i_{1}}}T_{i_{1}}^{* m_{i_{1}}}\big) {\clh} \\
		&=\big(I-T_{i_l}^{ m_{i_l}}T_{i_l}^{* m_{i_l}}\big) \cdots
		\big(I-T_{i_{2}}^{ m_{i_{2}}}T_{i_{2}}^{* m_{i_{2}}}\big)
		\bigg[\bigoplus_{k_{i_1}=0}^{m_{i_{1}}-1} {T_{i_{1}}^{k_{i_1}}}
		\cln \big(T_{i_{1}}^*\big)\bigg] \\
		&=\bigoplus_{k_{i_1}=0}^{m_{i_{1}}-1}  {T_{i_{1}}^{k_{i_1}}} 
		\big(I-T_{i_l}^{ m_{i_l}}T_{i_l}^{* m_{i_l}}\big) \cdots
		\big(I-T_{i_{2}}^{ m_{i_{2}}}T_{i_{2}}^{* m_{i_{2}}}\big)
		\Big[\cln (T_{i_{1}}^*)\Big]\\
		&=  \bigoplus_{k_{i_1},k_{i_2}=0}^{m_{i_{1}}-1,m_{i_{2}}-1} 
		{T_{i_{1}}^{k_{i_1}}} {T_{i_{2}}^{k_{i_2}}} \big(I-T_{i_l}
		^{ m_{i_l}}T_{i_l}^{* m_{i_l}}\big) \cdots
		\big(I-T_{i_{3}}^{ m_{i_{3}}}T_{i_{3}}^{* m_{i_{3}}}\big) 
		\Big[\cln (T_{i_{1}}^*) \cap \cln ( T_{i_{2}}^*)  \Big]\\
		&=\bigoplus_{k_{i_1}, \ldots, k_{i_l}=0}^{m_{i_{1}}-1, \ldots,m_{i_{l}}-1}  
		{T_{i_{1}}^{k_{i_1}}} \cdots {T_{i_{l}}^{k_{i_l}}}
		\Big[\cln ( T_{i_{1}}^*) \cap \cdots \cap	\cln ( T_{i_{l}}^*)  \Big].
		%	   &=\bigoplus_{k \in \{0, \dots, m-1\}^{\Lambda'}; k=0}^{m-1} T_{\Lambda'}^k
\end{align*}
Hence, 
\begin{align*}
\clh_{\Lambda}	
		&=\bigcap_{ {\bf m} \in \mathbb{Z}_{+}^{I_n \backslash \Lambda}} 
		T_{I_n \backslash \Lambda}^{\bf m} \Big[  \bigvee_{ m_{i_{1}}, \dots, m_{i_l} 
		\in \mathbb{N}} \Big\{ \bigoplus_{k_{i_1}, \dots, k_{i_l}=0}^{m_{i_{1}}-1, 
		\dots,m_{i_{l}}-1} {T_{i_{1}}^{k_{i_1}}} \cdots {T_{i_{l}}^{k_{i_l}}}
		\big(\cln ( T_{i_{1}}^*) \cap \dots \cap	 \cln ( T_{i_{l}}^*)\big)  \Big\} \Big] \\
		&=  \bigvee_{ m_{i_{1}}, \dots, m_{i_l} \in \mathbb{N}}  \bigg\{ \bigoplus_{k_{i_1}, 
		\dots, k_{i_l}=0}^{m_{i_{1}}-1, \dots,m_{i_{l}}-1} {T_{i_{1}}^{k_{i_1}}} \cdots {T_{i_{l}}^{k_{i_l}}} 
		\Big[\bigcap_{ {\bf m} \in \mathbb{Z}_{+}^{I_n \backslash \Lambda}}T_{I_n \backslash \Lambda}
		^{ \bf m} \big(\cln ( T_{i_{1}}^*) \cap \dots \cap	 \cln ( T_{i_{l}}^*)\big)\Big] \bigg\} \\
		&=  \bigvee_{ m_{i_{1}}, \dots, m_{i_l} \in \mathbb{N}}  \bigg\{ \bigoplus_{k_{i_1}, 
		\dots, k_{i_l}=0}^{m_{i_{1}}-1, \dots,m_{i_{l}}-1} {T_{i_{1}}^{k_{i_1}}} \cdots {T_{i_{l}}^{k_{i_l}}} 
		\Big[\bigcap_{ {\bf m} \in \mathbb{Z}_{+}^{I_n \backslash \Lambda}}	
		T_{I_n \backslash \Lambda}^{ \bf m} \big(\clw_{ \Lambda}\big)\Big] \bigg\}  \\
		&=  \bigoplus_ { {\bf k} \in \mathbb{Z}_{+}^{\Lambda}} T_{ \Lambda}^{\bf k} 
		\Big(\bigcap_{ {\bf m} \in \mathbb{Z}_{+}^{I_n \backslash \Lambda}}	T_{I_n \backslash 
		\Lambda}^{ \bf m} \clw_{ \Lambda}\Big).	
\end{align*}
If $\Lambda = \emptyset \subseteq I_n$, then repeating the same step
as the equation (\ref{multi-iso-equ}) for $j=n, n-1,\ldots, 2, 1$, we obtain
\[
\clh_{\Lambda} =\bigcap_{ {\bf m} \in \mathbb{Z}_{+}^{I_n }}
		T_{I_n}^{\bf m} \clh. 
\]
Therefore, for any $\Lambda \subseteq I_n$, we have
\begin{align*}
\clh_{\Lambda}=  \bigoplus_ { {\bf k} \in \mathbb{Z}_{+}^{\Lambda}} T_{ \Lambda}^{\bf k} 
\Big(\bigcap_{ {\bf m} \in \mathbb{Z}_{+}^{I_n \backslash \Lambda}}	T_{I_n \backslash 
\Lambda}^{ \bf m} \clw_{ \Lambda}\Big).
\end{align*}	
Clearly, $T_i|_{\clh_{\Lambda}}$ is unitary for all $i \in I_n \backslash \Lambda $ 
and $T_i|_{\clh_{\Lambda}}$ is shift for all $i \in \Lambda$. The uniqueness part 
is coming from the uniqueness of the classical Wold decomposition of isometries. 
	
This completes the proof.
\end{proof}

%%%%%%%%%%%%%%%%%%%%%%%%%%%%%%%%%%%%%%%%%%%%%%%%%%%%%%%%%%%%%%%%%%%%%%%%%%%%%%
%%%%%%%%%%%%%%%%%%%%%%%%%%%%%%%%%%%%%%%%%%%%%%%%%%%%%%%%%%%%%%%%%%%%%%%%%%%%%%%%
%%%%%%%%%%%%%%%%%%%%%%%%%%%%%%%%%%%%%%%%%%%%%%%%%%%%%%%%%%%%%%%%%%%%%%%%%%%%%%
%%%%%%%%%%%%%%%%%%%%%%%%%%%%%%%%%%%%%%%%%%%%%%%%%%%%%%%%%%%%%%%%%%%%%%%%%%%%%%%%

\NI\textit{Acknowledgement:} 
The authors are thankful to the anonymous reviewer for 
his/her critical and constructive reviews and suggestions 
that have substantially improved the presentation of the paper.
The second author's research work is supported in part by the 
Mathematical Research Impact Centric Support (MATRICS)
(MTR/2021/000695) and the Core Research Grant (CRG/2022/006891), 
SERB (DST), Government of India.

\label{Ref}

\end{document}